\theoremstyle{definition}
\newtheorem{thm}{Theorem}[section]
\newtheorem{defi}[thm]{Definition}
\newtheorem{exm}[thm]{Example}
\newtheorem{lem}[thm]{Lemma}
\newtheorem{rem}[thm]{Remark}
\newtheorem{pps}[thm]{Proposition}
\newtheorem{quest}[thm]{Question}
\newcommand{\CC}{\mathbb{C}}
\newcommand{\KK}{\mathbb{K}}
\newcommand {\PP}{\mathbb{P}}
\newcommand {\EE}{\mathbb{E}}
\newcommand{\cA}{\mathcal{A}}
\newcommand{\cM}{\mathcal{M}}
\newcommand{\cO}{\mathcal{O}}
\newcommand{\cE}{\mathcal{E}}
\newcommand{\nii}{\noindent}
\DeclareMathOperator{\rk}{rank}
\begin{document}

\title{Irreducibility of the moduli space of orthogonal instanton bundles on $\PP^n$}
\author[Aline V. Andrade]{Aline V. Andrade}
\address{Instituto de Matem\'atica, Estat\'istica e Computa\c{c}\~{a}o Cient\'ifica - UNICAMP, Rua S\'ergio Buarque de Holanda 651, Distr. Bar\~ao Geraldo, CEP 13083-859, Campinas (SP), Brasil}
\email{aline.andrade@ime.unicamp.br}

\author[Simone Marchesi]{Simone Marchesi}
\address{Instituto de Matem\'atica, Estat\'istica e Computa\c{c}\~{a}o Cient\'ifica - UNICAMP, Rua S\'ergio Buarque de Holanda 651, Distr. Bar\~ao Geraldo, CEP 13083-859, Campinas (SP), Brasil}
\email{marchesi@ime.unicamp.br}

\author[Rosa M. Mir\'o-Roig]{Rosa M. Mir\'o-Roig}
\address{Department de matem\`{a}tiques i Inform\`{a}tica, Universitat de Barcelona, Gran Via de les Corts Catalanes 585, 08007 Barcelona,
Spain}
\email{miro@ub.edu}

\begin{abstract}
In order to obtain existence criteria for orthogonal instanton bundles on $\PP^n$, we provide a bijection between equivalence classes of orthogonal instanton bundles with no global sections and symmetric forms. Using such correspondence we are able to provide explicit examples of orthogonal instanton bundles with no global sections on $\PP^n$ and prove that every orthogonal instanton bundle with no global sections on $\PP^n$ and charge $c\geq 3$ has rank $r \leq (n-1)c$. We also prove that when the rank $r$ of the bundles reaches the upper bound, $\cM_{\PP^n}^{\cO}(c,r)$, the coarse moduli space of orthogonal instanton bundles with no global sections on $\PP^n$, with charge $c\geq 3$ and rank $r$,  is affine, reduced and irreducible. Last, we construct Kronecker modules to determine the splitting type of the bundles in $\cM_{\PP^n}^{\cO}(c,r)$, whenever is non-empty.\end{abstract}

\thanks{ The first author was supported by CAPES process number 99999.000282/2016-02. The second author is partially supported by Fapesp grant 2017/03487-9. The third author has been partially supported by MTM2016-78623-P.
\\ {\it Key words and phrases.} Orthogonal instanton bundles. Symmetric forms. Moduli spaces. Geometric invariant theory. Splitting type. Kronecker modules.
\\ {\it 2010 Mathematic Subject Classification.} 14D20 and 14J60.}

\maketitle

\tableofcontents

\markboth{Aline V. Andrade, Simone Marchesi, Rosa M. Mir\'o-Roig}{Irreducibility of the moduli space of orthogonal instanton bundles on $\PP^n$}

\today

\large

\section{Introduction}

\nii Since the 1970's the "instantons" or pseudo-particle solutions of the classical Yang-Mills equations in the Euclidean $4$-space have awaken great interest in the physical and mathematical communities due the link that they provide between algebraic geometry and mathematical physics (see for instance \cite{AHS1977} and \cite{AW1977}). In \cite{ADHM1978} Atiyah, Drinfeld, Hitchin and Manin provided the so called "ADHM contruction of instanton" on $\PP^3$. In \cite{OS1986} Okonek and Spindler generalized the contruction of instanton bundles to $\PP^{2n+1}$, since then the study of this family of bundles and their moduli spaces have been a central topic in algebraic geometry. The moduli space $\cM_{\PP^3}(c)$ of the $c$-instanton bundles on $\PP^3$, i.e. of stable $2-$bundles $\cE$ with Chern classes $(c_1,c_2)=(0,c)$ and $\textrm{H}^1(\cE(-2))=0$, is expected to be a smooth and irreducible variety with dimension $8c-3$ for $c\geq 1$. This problem was approached by several authors (see \cite{Bar1981}, \cite{CTT2003}, \cite{ES1981},  \cite{Har1978}, \cite{KO2003}, \cite{LeP1983}) and while the irreducibility was completely solved by Tikhomirov in \cite{Tik2012} and \cite{Tik2013}, the smoothness was solved on $\CC\PP^3$ by Jardim and Verbitsky (see \cite{JV2014}), but remains open for the $3$-dimensional projective space over any other algebraically closed field of characteristic $0$.

\nii In order to understand moduli spaces of stable vector bundles over a projective variety, in \cite{Jar2005} Jardim extended the definition of instantons to even-dimensional projective spaces and allowed non-locally-free sheaves of arbitrary rank. Jardim defined an {\bf instanton sheaf on $\PP^n$ ($n \geq 2$)} as a torsion-free coherent sheaf $\cE$ on $\PP^n$ with first Chern class $c_1(\cE) = 0$ satisfying some cohomological conditions (see Definition \ref{definst} for details). If $\cE$ is locally-free, $\cE$ is called an {\bf instanton bundle} and in addition, if $\cE$ is a rank-$2n$ bundle on $\PP^{2n+1}$ with trivial splitting type, then $\cE$ is a {\bf mathematical instanton bundle} as defined by Okonek and Spindler in \cite{OS1986}. Studying the moduli space of instanton bundles becomes more complicated for higher dimensional projective spaces or higher rank, because of this many authors have considered instanton bundles with some additional structure (special, symplectic and orthogonal). For example, in \cite{CHMS2014} Costa, Hoffmann, Mir\'o-Roig and Schmitt proved that the moduli space of all symplectic instanton bundles on $\PP^{2n+1}$ with $n\geq 2$ is reducible; in \cite{MO1997} Mir\'o-Roig and Orus-Lacort proved that $\cM_{\PP^{2n+1}}(c)$ is singular for $n \geq 2$ and $c\geq 3$; Costa and Ottaviani in \cite{CO12002} proved that $\cM_{\PP^{2n+1}}(c)$ is affine and introduced an invariant which allowed Farnik, Frapporti and Marchesi to prove in \cite{FFM2009} that there are no orthogonal instanton bundles with rank $2n$ on $\PP^{2n+1}$. Using the ADHM construction introduced by Henni, Jardim and Martins in \cite{HJV2015}, Jardim, Marchesi and Wi\ss{}dorf in \cite{JMW2016} consider autodual instantons of arbitrary rank on projective spaces, with focus on symplectic and orthogonal instantons; they described the moduli space of framed autodual instanton bundles and showed that there are no orthogonal instanton bundles with trivial splitting type, arbitrary rank $r$ and charge $2$ or odd on $\PP^n$.

\nii While in \cite{AB2015} Abuaf and Boralevi proved that the moduli space of rank $r$ stable orthogonal bundles on $\PP^2$, with Chern classes $(c_1,c_2)=(0,c)$ and trivial splitting type on the general line, is smooth and irreducible for $r=c$ and $c\geq 4$, and $r=c-1$ and $c\geq 8$, the results of Farnik, Frapport and Marchesi in \cite{FFM2009} and Jardim, Marchesi and Wi\ss{}dorf in \cite{JMW2016}, already mentioned, show us that orthogonal instanton bundles on $\PP^n$, $n\geq 3$ are for some reason hard to find and that it is interesting to establish existence criteria for these bundles.

\nii Hence the main goal of this work is to provide existence criteria for orthogonal instanton bundles with higher rank on $\PP^n$, for $n\geq 3$ and then to study their moduli space and splitting type.\\

\nii Next, we outline the structure of the paper. In section 2 we introduce some preliminaries necessary through the text. In section 3, in order to establish existence criteria for orthogonal instanton bundles on $\PP^n$, for $n\geq 3$, we define certain equivalence classes of orthogonal instanton bundles and provide a bijection between these classes and symmetric forms (see Theorem \ref{thmequivalence}). Using such correspondence we prove the following result.\\

\nii {\bf Proposition} \ref{mainthm} Let $c\geq 3$ be an integer. Every orthogonal instanton bundle $\cE$ on $\PP^n$ ($n\geq 3$) with no global sections and charge $c$ has rank $r\leq(n-1)c$. Moreover, there are no orthogonal instanton bundles $\cE$ on $\PP^n$ with no global sections and charge $c$ equal to $1$ or $2$.\\

\nii In section 4, we study the case when the rank reaches the upper bound and prove that there exists an affine coarse moduli space for our problem and addition we prove that this moduli space is irreducible and reduced (see Theorem \ref{finemoduli}). Finally, in section 5, given an orthogonal instanton bundle $\cE$ on $\PP^n$ with charge $c$, rank $(n-1)c$ and no global sections, for $c,n\geq 3$, we construct a Kronecker module to determine whether the restriction of $\cE$ to a line $L\subset \mathbb{P}^n$ is trivial or not (see Theorem \ref{splitting}).

\section{Preliminaries}

\nii Let $\KK$ be an algebraically closed field of characteristic $0$. Let us consider $\PP^n = \PP(V)$, where $V$ is a $(n+1)$-dimensional $\KK$-vector space,  $n\geq 2$. If $\cE$ is a vector bundle on $\PP^n$, then $h^i(\cE(k))$ denotes the dimension of $\textrm{H}^i(\cE(k))$, the $i^{th}$ cohomology group of $\cE$, and $\cE^{\vee}$ denotes the dual of $\cE$, i. e., $\cE^{\vee}=\mathcal{H}om(\cE, \cO_{\PP^n})$. We denote by $H_c$ a $c$-dimensional $\KK$-vector space, with $c\geq 1$ and if $U$ is a $\KK$-vector space, we denote by $U^{\vee}$ the dual vector space of $U$. \\

\begin{defi}\label{definst} An {\bf instanton }sheaf on $\PP^n$  is a torsion-free coherent sheaf $\mathcal{E}$ on $\PP^n$ with $c_1(\cE)=0$ satisfying the following cohomological conditions:
\begin{itemize}
								\item[(1)] $\textrm{H}^0(\mathcal{E}(-1))=\textrm{H}^n(\mathcal{E}(-n))=0$;
								\item[(2)] $\textrm{H}^1(\mathcal{E}(-2))=\textrm{H}^{n-1}(\mathcal{E}(1-n))=0$, if $n\geq 3$;
								\item[(3)] $\textrm{H}^i(\mathcal{E}(-k))=0$ for $2 \leq p \leq n-2$ and all $k$, if $n\geq 4$.
							\end{itemize}
The integer $c=-\chi(\mathcal{E}(-1)) $ is called charge of $\mathcal{E}$.
\end{defi}



\nii We will say that a vector bundle $\mathcal{E}$ is {\bf autodual} if it is isomorphic to its dual, i.e. there exists an isomorphism $\phi: \mathcal{E} \rightarrow \mathcal{E}^{\vee}$. If the isomorphism $\phi$ satisfies $\phi^{\vee}=-\phi$, the vector bundle is called {\bf symplectic}. If the isomorphism $\phi$ satisfies $\phi^{\vee}=\phi$, the vector bundle is called {\bf orthogonal}.\\

\nii Let $\mathcal{E}$ be an {\bf orthogonal instanton bundle over $\PP^n$ ($n\geq3$)} with charge $c$, rank $r$ and no global sections ($\textrm{H}^0(\mathcal{E})=0$). Considering the following exact sequence
\begin{equation*}
\displaystyle \xymatrix{0\ar[r] & \mathcal{E}(-i-1) \ar[r] & \mathcal{E}(-i) \ar[r] & \mathcal{E}(-i)\mid_{\PP^{n-1}} \ar[r] & 0},
\end{equation*}
\nii by the instanton cohomological conditions in Definition \ref{definst}, the Serre duality and the Hirzebruch-Riemann-Roch theorem, for $0\leq i \leq n$ and $-n-1\leq k \leq 0$, one has 

\begin{center}
$h^i(\mathcal{E}(k))=\left\lbrace\begin{array}{ll}
c,& \textrm{if }(i,k) \in\{(1,-1),(n-1,-n)\};\\
(n-1)c-r,&\textrm{if }(i,k)\in\{(1,0),(n-1,-n-1)\};\\
0, &\textrm{otherwise.}
\end{array}\right.$
\end{center}
\vspace{2mm}

\section{The equivalence}

\nii Consider a triple $(\cE,\phi, f)$, where
\begin{itemize}
\item $\cE$ is an orthogonal instanton bundle on $\PP^n$ with charge $c$, rank $r$ and no global sections.
\item $\xymatrix{\phi:\cE\ar[r]^-{\cong}& \cE^{\vee} }$ is an orthogonal structure of $\cE$, i.e. $\phi^{\vee}=\phi$.
\item $\xymatrix{f:H_c \ar[r]^-{\cong}& \textrm{H}^{n-1}(\cE(-n)) }$.
\end{itemize}

\begin{defi}\label{defeq} Two triples $(\cE_1, \phi_1, f_1)$ and $(\cE_2, \phi_2 ,f_2)$ are called {\bf equivalent} if there is an isomorphism $\xymatrix{g:\cE_1 \ar[r]^-{\cong}& \cE_2 }$ such that  the following diagrams commute

\begin{equation*}
\xymatrix{ \cE_1 \ar[r]^-{\phi_1} \ar[d]_-{g} & \cE_1^{\vee}&  &H_c \ar[r]^-{f_1} \ar[d]_-{\lambda \textrm{Id}_c}& \textrm{H}^{n-1}(\cE_1(-n))\ar[d]^-{g_{\ast}}\\
\cE_2 \ar[r]_-{\phi_2} & \cE_2^{\vee} \ar[u]_-{g^{\vee}}&  &H_c \ar[r]_-{f_2}& \textrm{H}^{n-1}(\cE_2(-n)),
}
\end{equation*}

\nii where $\xymatrix{g_{\ast}:\textrm{H}^{n-1}(\cE_1(-n)) \ar[r]^-{\cong}& \textrm{H}^{n-1}(\cE_2(-n)) }$ is the induced isomorphism in cohomology and $\lambda \in \{-1,1\}$. We denote by $[\cE, \phi, f]$ the {\bf equivalence class} of a triple $(\cE,\phi,f)$.
\end{defi}

\nii Fixing the integers $c$ and $r$, we will denote by $\EE[c,r]$ the set of all equivalence classes $[\cE,\phi, f]$ of orthogonal instanton bundles with charge $c$, rank $r$ and no global sections over $\PP^n$.

\begin{lem}\label{help} Each triple $(\cE,\phi, f) \in \EE[c,r]$ defines a morphism $\xymatrix{A:\textrm{H}_c \otimes V \ar[r]& \textrm{H}_c^{\vee} \otimes V^{\vee}}$, living in $ \bigwedge^2 H_c^{\vee}\otimes \bigwedge^2 V^{\vee}$, that in turn defines a monad.

\end{lem}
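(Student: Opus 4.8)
The plan is to extract the morphism $A$ from a monad presenting $\cE$, and then to show that the monad relation accounts for the exterior square in the $V$-factors and the orthogonality $\phi^\vee=\phi$ for the exterior square in the $H_c$-factors. First I would produce a monad for $\cE$: the cohomological conditions of Definition~\ref{definst} together with $\textrm{H}^0(\cE)=0$ are exactly what Beilinson's theorem needs to present $\cE$ as the cohomology of a monad, and --- resolving the $\Omega^1_{\PP^n}(1)$-term occurring in it by means of the Euler sequence $0\to\cO_{\PP^n}(-1)\to V\otimes\cO_{\PP^n}\to T_{\PP^n}(-1)\to 0$ and its dual, and reading off the cohomology table of Section~2 --- I would put this monad in the form
\[
0\longrightarrow \textrm{H}^{n-1}(\cE(-n))\otimes\cO_{\PP^n}(-1)\xrightarrow{\ \alpha\ }(H_c^\vee\otimes V^\vee)\otimes\cO_{\PP^n}\xrightarrow{\ \beta\ }\textrm{H}^1(\cE(-1))\otimes\cO_{\PP^n}(1)\longrightarrow 0,
\]
with $\beta$ induced by the canonical evaluation $V^\vee\otimes\cO_{\PP^n}\twoheadrightarrow\cO_{\PP^n}(1)$. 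Here $f$ identifies the left-hand term with $H_c\otimes\cO_{\PP^n}(-1)$, while Serre duality and $\phi\colon\cE\xrightarrow{\cong}\cE^\vee$ give $\textrm{H}^1(\cE(-1))\cong\textrm{H}^{n-1}(\cE(-n))^\vee\cong H_c^\vee$ and identify the right-hand term with $H_c^\vee\otimes\cO_{\PP^n}(1)$. Since $\operatorname{Hom}(\cO_{\PP^n}(-1),\cO_{\PP^n})=V^\vee$, the map $\alpha$ becomes an element of $H_c^\vee\otimes V^\vee\otimes H_c^\vee\otimes V^\vee=\operatorname{Hom}(H_c\otimes V,H_c^\vee\otimes V^\vee)$, and I would let $A\colon H_c\otimes V\to H_c^\vee\otimes V^\vee$ be this element; equivalently $\alpha$ is $A$ precomposed with $\mathrm{id}_{H_c}$ tensored with the Euler inclusion $\cO_{\PP^n}(-1)\hookrightarrow V\otimes\cO_{\PP^n}$.

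Next, the monad relation $\beta\circ\alpha=0$ delivers the first exterior square: $\beta\circ\alpha$ lies in $\operatorname{Hom}(H_c,H_c^\vee)\otimes\textrm{H}^0(\cO_{\PP^n}(2))=H_c^\vee\otimes H_c^\vee\otimes\operatorname{Sym}^2 V^\vee$ and is the image of $A$ under multiplication $V^\vee\otimes V^\vee\to\operatorname{Sym}^2 V^\vee$ on the two $V$-factors, so $\beta\circ\alpha=0$ is equivalent to $A\in H_c^\vee\otimes H_c^\vee\otimes\bigwedge^2 V^\vee$ --- already visible from the fact that, before being resolved, $\alpha$ factors through $H_c^\vee\otimes\Omega^1_{\PP^n}(1)$ and $\textrm{H}^0(\Omega^1_{\PP^n}(2))=\bigwedge^2 V^\vee$. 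For the second exterior square I would exploit $\phi^\vee=\phi$. The monad above is functorial in $\cE$ and has no non-zero homotopies (the relevant backward $\operatorname{Hom}$-groups vanish because $\textrm{H}^0(\cO_{\PP^n}(-1))=0$), so $\phi$ lifts uniquely to an isomorphism of monads $\Phi=(\mu,\psi,\nu)$ from the monad to its dual, and the scalar $\lambda\in\{-1,1\}$ in Definition~\ref{defeq} normalises $\Phi$ compatibly with $f$. The condition $\phi^\vee=\phi$ becomes $\Phi^\vee=\Phi$, forcing $\nu=\mu^\vee$ and making $\psi$ a symmetric self-duality of the middle term; chasing $\psi$ around the two squares of $\Phi$ and using the canonical shape of $\beta$, I would conclude that $A$, seen as a bilinear form on $H_c\otimes V$, is symmetric; being already alternating in $V$, and since $\operatorname{Sym}^2(H_c^\vee\otimes V^\vee)=(\operatorname{Sym}^2 H_c^\vee\otimes\operatorname{Sym}^2 V^\vee)\oplus(\bigwedge^2 H_c^\vee\otimes\bigwedge^2 V^\vee)$, such an $A$ lies in $\bigwedge^2 H_c^\vee\otimes\bigwedge^2 V^\vee$.

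I expect the last step to be the main obstacle: one must track carefully the signs produced by Serre duality on $\PP^n$, where $\omega_{\PP^n}=\cO_{\PP^n}(-n-1)$ and the cohomology in play sits in degrees $1$ and $n-1$, in order to be certain that $\phi^\vee=\phi$ produces a \emph{symmetric} form on $H_c\otimes V$ (whence the exterior square $\bigwedge^2 H_c^\vee$ in the $H_c$-factors) and not the alternating one that the symplectic case $\phi^\vee=-\phi$ would give. Finally, to see that $A$ defines a monad I would note that pairing $A$ with the canonical maps produces the complex $0\to H_c\otimes\cO_{\PP^n}(-1)\xrightarrow{\alpha_A}(H_c^\vee\otimes V^\vee)\otimes\cO_{\PP^n}\xrightarrow{\mathrm{id}\otimes\mathrm{ev}}H_c^\vee\otimes\cO_{\PP^n}(1)\to 0$, with $\alpha_A$ as above: it is a complex by the computation $\beta\circ\alpha=0$, the second map is fibrewise surjective, and $\alpha_A=\alpha$ is fibrewise injective, being the first map of the monad that presents the bundle $\cE$ --- equivalently $A(h\otimes v)\neq 0$ whenever $0\neq h\in H_c$ and $v$ spans a line of $V$. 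Thus $A$ defines a monad, which is what is required.
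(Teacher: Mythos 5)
Your overall strategy --- extract $A$ from a Beilinson-type monad, obtain the alternation in the $V$-factors from the monad relation $\beta\circ\alpha=0$, and the alternation in the $H_c$-factors from the symmetry coming from $\phi^{\vee}=\phi$ together with the decomposition $S^2(H_c^{\vee}\otimes V^{\vee})=(S^2H_c^{\vee}\otimes S^2V^{\vee})\oplus(\bigwedge^2H_c^{\vee}\otimes\bigwedge^2V^{\vee})$ --- is the same as the paper's, and both mechanisms are correct. (The paper itself only asserts that $A$ is symmetric without proof, so your sketch of lifting $\phi$ to a self-dual isomorphism of monads is no less complete than the original on that particular point, and your derivation of the $\bigwedge^2V^{\vee}$-factor from $\beta\circ\alpha=0$ is in fact cleaner than the paper's argument with the sequence \eqref{eval}.)

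The genuine gap is your normal form for the monad. The Beilinson monad for $\cE$ has middle term $\textrm{H}^1(\cE\otimes\Omega^1_{\PP^n})\otimes\cO_{\PP^n}$, and by the cohomology table of Section 2 together with the exact sequence $0\to\textrm{H}^1(\cE\otimes\Omega^1_{\PP^n})\to\textrm{H}^1(\cE(-1))\otimes V^{\vee}\to\textrm{H}^1(\cE)\to0$ this space has dimension $2c+r$ and sits inside $H_c^{\vee}\otimes V^{\vee}$ with codimension $h^1(\cE)=(n-1)c-r$. Replacing the middle term by all of $(H_c^{\vee}\otimes V^{\vee})\otimes\cO_{\PP^n}$ is therefore legitimate only in the extremal case $r=(n-1)c$; for $r<(n-1)c$ the three-term complex you end with has cohomology of rank $(n-1)c\neq r$ and does not present $\cE$. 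Correspondingly, the morphism $A$ constructed in the paper has rank $2c+r$ rather than full rank, and the monad it defines is built on the coimage $W=(H_c\otimes V)/\textrm{Ker}\,A$ together with the induced symmetric isomorphism $q_A\colon W\to W^{\vee}$ (property (A3)), namely $H_c\otimes\cO_{\PP^n}(-1)\to W\otimes\cO_{\PP^n}\to H_c^{\vee}\otimes\cO_{\PP^n}(1)$. Your proposal never isolates $W$ or $q_A$, and without them the step ``$A$ defines a monad recovering $\cE$'' fails for non-maximal rank. The definition of $A$ itself is salvageable --- compose the first Beilinson map with the canonical inclusion $\textrm{H}^1(\cE\otimes\Omega^1_{\PP^n})\hookrightarrow\textrm{H}^1(\cE(-1))\otimes V^{\vee}$ --- but the passage back from $A$ to a monad with cohomology $\cE$ requires the factorization through the coimage, which is the part of the lemma your argument omits.
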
 

\begin{proof}

\nii We consider the Euler exact sequence and its exterior powers

\begin{equation}\label{Eulerseq}
\xymatrix {0 \ar[r] & \Omega_{\PP^n}^1\ar[r]^-{i_1} & V^{\vee} \otimes \cO_{\PP^n}(-1) \ar[r]^-{ev} &\cO_{\PP^n} \ar[r]& 0},
\end{equation}

\begin{equation}\label{koszuli}
\xymatrix {0 \ar[r] & \Omega^{i+1}_{\PP^n}\ar[r] &\wedge^{i+1} V^{\vee} \otimes \cO_{\PP^n}(-i-1) \ar[r] &\Omega_{\PP^n}^i \ar[r]& 0}, \text{ with $1\leq i\leq n-2$, and}
\end{equation}

\begin{equation}\label{koszul}\xymatrix {0 \ar[r] & \bigwedge^{n+1} V^{\vee} \otimes \cO_{\PP^n}(-n-1) \ar[r] &\bigwedge^n V^{\vee} \otimes \cO_{\PP^n}(-n) \ar[r]^-{i_2} &\Omega^{n-1}_{\PP^n} \ar[r]& 0}
\end{equation}

\nii induced by the Koszul complex of $\xymatrix{V^{\vee}\otimes\cO_{\PP^{n}}(-1)\ar[r]^-{ev}&\cO_{\PP^{n}} }$, where $ev$ denotes the canonical evaluation map. Tensoring \eqref{Eulerseq} with $\cE$ we obtain $\textrm{H}^i(\cE \otimes \Omega_{\PP^n}^1)=0 $, for $i=0,3,\ldots,n$ and the exact sequence
\begin{equation}\label{omegai}
\xymatrix {0 \ar[r] & \textrm{H}^1(\cE \otimes \Omega_{\PP^n}^1)\ar[r]^-{i_1} & \textrm{H}^1(V^{\vee} \otimes \cE(-1)) \ar[r] &\textrm{H}^1(\cE) \ar[r]& H^2(\cE \otimes \Omega_{\PP^n}^1)\ar[r] & 0;}
\end{equation}

\nii Tensoring \eqref{koszuli} with $\cE$ we obtain
\begin{equation}\label{omegaii}
\textrm{H}^j(\cE \otimes \Omega_{\PP^n}^i)\cong \textrm{H}^{j+1}(\cE \otimes \Omega_{\PP^n}^{i+1})
\end{equation}
\nii and $\textrm{H}^0(\cE \otimes \Omega_{\PP^n}^i)= \textrm{H}^{n-1}(\cE \otimes \Omega_{\PP^n}^{i+1})=0 $, for $1\leq i\leq n-2$ and $0\leq j \leq n-1$. Finally tensoring \eqref{koszul} with $\cE$ we obtain $\textrm{H}^i(\cE \otimes \Omega_{\PP^n}^{n-1})=0 $, for $i=0,\ldots,n-3, n$ and the exact sequence

\begin{equation}\label{omegaiii}
0 \rightarrow \textrm{H}^{n-2}(\cE \otimes \Omega^{n-1}_{\PP^n})\rightarrow \textrm{H}^{n-1}(\bigwedge^{n+1} V^{\vee} \otimes \cE(-n-1)) \rightarrow \textrm{H}^{n-1}(\bigwedge^n V^{\vee} \otimes \cE(-n)) \stackrel{i_2}{\rightarrow} \textrm{H}^{n-1}(\cE \otimes \Omega^{n-1}_{\PP^n}) \rightarrow 0.
\end{equation}

\nii Therefore we have
$$\textrm{H}^2(\cE \otimes \Omega_{\PP^n}^1)=\textrm{H}^3(\cE \otimes \Omega_{\PP^n}^2)=\cdots=\textrm{H}^n(\cE \otimes \Omega_{\PP^n}^{n-1})=0, $$

$$\textrm{H}^{n-2}(\cE \otimes \Omega_{\PP^{n}}^{n-1})=\textrm{H}^{n-3}(\cE \otimes \Omega_{\PP^n}^{n-2})=\cdots=\textrm{H}^0(\cE \otimes \Omega_{\PP^n}^{1})=0, $$

\begin{equation*}
h^{n-1}(\cE \otimes \Omega^{n-1}_{\PP^n})= h^1(\cE \otimes \Omega^1_{\PP^n})=h^1(V^{\vee}\otimes \cE(-1))-h^1(\cE)=2c+r,
\end{equation*}

\nii and  the exact sequences
\begin{equation*}
\small{\xymatrix{0 \ar[r] & \textrm{H}^{n-1}(\cE(-n-1)) \otimes\bigwedge^{n+1} V^{\vee} \ar[r]^-{a} & \textrm{H}^{n-1}(\cE(-n)) \otimes \bigwedge^n V^{\vee} \ar[r]^-{i_2} &\textrm{H}^{n-1}(\cE \otimes \Omega^{n-1}_{\PP^n}) \ar[r] & 0}}
\end{equation*}
\nii and
\begin{equation}\label{cttA}
\xymatrix{0 \ar[r] & \textrm{H}^1(\cE \otimes \Omega^1_{\PP^n}) \ar[r]^-{i_1} & \textrm{H}^1(\cE(-1)) \otimes V^{\vee}\ar[r]^-{b} &\textrm{H}^1(\cE) \ar[r] & 0.}
\end{equation}

\nii By the functoriality of the Serre duality, we have $i_1=i_2^{\vee}$ and  the following diagram with exact rows

\begin{equation}\label{Aprime}
\tiny{\xymatrix{0 \ar[r] & \textrm{H}^{n-1}(\cE(-n-1)) \otimes\bigwedge^{n+1} V^{\vee} \ar[r]^-{a} & \textrm{H}^{n-1}(\cE(-n)) \otimes \bigwedge^n V^{\vee} \ar[d]^-{A'}\ar[r]^-{i_2} &\textrm{H}^{n-1}(\cE \otimes \Omega^{n-1}_{\PP^n}) \ar[r] & 0\\
0 & \textrm{H}^1(\cE)  \ar[l] & \textrm{H}^1(\cE(-1)) \otimes V^{\vee}\ar[l]^-{b}&\textrm{H}^1(\cE \otimes \Omega^1_{\PP^n}) \ar[l]^-{i_2^{\vee}}\ar[u]_-{\cong}^-{\partial} & \ar[l]0}}
\end{equation}

\nii where $A'=i_2^{\vee} \circ \partial^{-1} \circ i_2$, moreover, the Euler  sequence \eqref{Eulerseq} yields the canonical isomorphism $\small{\xymatrix{\omega_{\PP^n} \ar[r]^-{\cong}& \bigwedge^{n+1} V^{\vee} \otimes \cO_{\PP^n}(-n-1)}}$. So, fixing an isomorphism $ \xymatrix{\tau:\KK \ar[r]^-{\cong}& \bigwedge^{n+1} V^{\vee}}$, we have the isomorphisms
\begin{equation}\label{tau12}
\xymatrix{\tau_1:V \ar[r]^-{\cong}& \bigwedge^{n} V^{\vee}} \textrm{ and } \xymatrix{\tau_2: \omega_{\PP^n} \ar[r]^-{\cong}& \cO_{\PP^n}(-n-1)}.
\end{equation}

\nii Thus, each  $[\cE, f, \phi]\in \EE(c,r)$ defines a morphism $\xymatrix{A:\textrm{H}_c \otimes V \ar[r]& \textrm{H}_c^{\vee} \otimes V^{\vee}}$ through the following composition

$$\tiny{\xymatrix{A:H_c\otimes V \ar[r]^-{\textrm{Id} \otimes \tau_{1}}& H_c\otimes \bigwedge^n V^{\vee} \ar[r]^-{f\otimes \textrm{Id}}& \textrm{H}^{n-1}(\cE(-n))\otimes \bigwedge^n V^{\vee}\ar[r]^-{A'}& \textrm{H}^1(\cE(-1))\otimes V^{\vee}\ar[r]^-{\phi\otimes \textrm{Id}}& \textrm{H}^1(\cE^{\vee}(-1))\otimes V^{\vee}\\
& \ar[r]^-{\textrm{SD}\otimes \textrm{Id}}& \textrm{H}^{n-1}(\cE(1)\otimes \omega_{\PP^n})^{\vee}\otimes V^{\vee} \ar[r]^-{\tau_{2}\otimes \textrm{Id}} & \textrm{H}^{n-1}(\cE(-n))^{\vee}\otimes V^{\vee}\ar[r]^-{f^{\vee}\otimes \textrm{Id}}&H^{\vee}_c \otimes V^{\vee},
}}$$

\nii where SD denotes the Serre duality isomorphism. Therefore we can write

\begin{equation}\label{Adef}
A= ((f^{\vee}\circ \tau_2 \circ \textrm{SD} \circ \phi) \otimes \textrm{Id})\circ A^{\prime}\circ (f \otimes \tau_1).
\end{equation}

\nii Note that, since $\tau$ is a multiplication by a scalar, $A$ does not depend on the choice of $\tau$.

\nii It is possible to prove that $A$ is symmetric, therefore,

\begin{equation}\label{imageA}
A \in (S^2 H^{\vee}_c \otimes S^2 V^{\vee}) \oplus (\bigwedge^2H^{\vee}_c \otimes \bigwedge^2 V^{\vee}).
\end{equation}

\nii We will show that actually $A \in \bigwedge^2 H^{\vee}_c \otimes \bigwedge^2 V^{\vee}$.
\nii Let us set $W:=\dfrac{H_c\otimes V}{\textrm{Ker } A}$, and combining \eqref{Aprime} and \eqref{Adef} we have the commutative diagram of exact rows

\begin{equation}\label{DiagA}
\small{\xymatrix{
 &  & H_c \otimes V \ar@{-->}@/_1cm/[ddd]_A \ar[d]^{f \otimes \tau_1}& & \\
0 \ar[r] & H^{n-1}(\cE(-n-1)) \otimes\bigwedge^{n+1} V^{\vee} \ar[r]^-{a} & H^{n-1}(\cE(-n)) \otimes \bigwedge^n V^{\vee} \ar[d]^{A^{\prime}}\ar[r]^-{i_2} &H^{n-1}(\cE \otimes \Omega^{n-1}_{\PP^n)}) \ar[r] & 0\\
0 & H^1(\cE)  \ar[l] & H^1(\cE(-1)) \otimes V^{\vee}\ar[l]^-{b} \ar[d]^{(f^{\vee}\circ \tau_2 \circ SD \circ \phi) \otimes \textrm{Id}}&H^1(\cE \otimes \Omega_{\PP^n)}) \ar[l]^-{i_2^{\vee}}\ar[u]_{\cong}^-{\partial} & \ar[l]0,\\
 &  & H_c^{\vee} \otimes V^{\vee} & & }}
\end{equation}

\nii which tells us that $\textrm{dim } W =2c+r$ and induces the diagram

\begin{equation}\label{defqA}
\xymatrix{0 \ar[r] & \textrm{Ker } A \ar[r] & H_c\otimes V \ar[d]^{A}\ar[r]^-{p} &W \ar[r] \ar[d]_-{\cong}^-{q_A}& 0\\
0 & \textrm{Ker } A^{\vee}  \ar[l] & H_c^{\vee}\otimes V^{\vee} \ar[l]&W^{\vee}\ar[l]^-{p^{\vee}} & \ar[l]0.}
\end{equation}

\nii where $p$ is the canonical projection and $\xymatrix{q_A: W \ar[r]^-{\cong}& W^{\vee}}$ is a symmetric isomorphism. So we can define the induced morphism of sheaves

\begin{equation}
\xymatrix{a^{\vee}_A: W^{\vee} \otimes \cO_{\PP^n} \ar[r]^-{p^{\vee}\otimes \textrm{Id}}&H^{\vee}_c \otimes V^{\vee}\otimes \cO_{\PP^n} \ar[r]^-{\textrm{Id}\otimes ev}& H^{\vee}_c \otimes \cO_{\PP^n}(1)}
\end{equation}

\nii which is surjective, therefore $a_A$ is injective, and the composition $$\xymatrix{\psi: H_c \otimes \cO_{\PP^n}(-1)\ar[r]^-{a_A}&  W \otimes \cO_{\PP^n} \ar[r]^{q_A\otimes \textrm{Id}} & W^{\vee} \otimes \cO_{\PP^n} \ar[r]^-{a_A^{\vee}} & H^{\vee}_c \otimes \cO_{\PP^n}(1)}$$

\nii is zero.

\nii Since $A$ is symmetric, we can write $A=A_1+A_2$, where $A_1\in \bigwedge^2 H_c^{\vee}\otimes \bigwedge^2 V^{\vee}$ and $A_2\in S^2 H_c^{\vee}\otimes S^2 V^{\vee}$. By the Euler sequence \eqref{Eulerseq} we have

\begin{equation}\label{eval}
\xymatrix{0\ar[r]&\bigwedge^2(\Omega(1))\ar[r]& \bigwedge^2 V^{\vee}\otimes \cO\ar[rr]^-{( \textrm{Id}\otimes ev)\circ(i \otimes \textrm{Id})}&&V^{\vee}\otimes\cO(1)\ar[r]^-{ev}&\cO(2)\ar[r]&0,}
\end{equation}

\nii where $i:\bigwedge^2V^{\vee}\hookrightarrow V^{\vee}\otimes V^{\vee}$ is the inclusion. Note that $\psi= (\textrm{Id}\otimes ev) \circ A \circ (\textrm{Id}\otimes ev^{\vee})$, thus
$$\psi= (\textrm{Id}\otimes ev) \circ A_1 \circ (\textrm{Id}\otimes ev^{\vee})+(\textrm{Id}\otimes ev) \circ A_2 \circ (\textrm{Id}\otimes ev^{\vee}).$$

\nii By the sequence \eqref{eval}, we have $\textrm{Im } A_1 \subset \textrm{Ker } ev$ and therefore $\psi=(\textrm{Id}\otimes ev) \circ A_2 \circ (\textrm{Id}\otimes ev^{\vee})$; moreover, $\textrm{Im } A_2 \subset S^2H_c^{\vee}\otimes S^2V^{\vee}\not\subset \textrm{Ker } ev$, otherwise the evaluation map would be the zero map. Hence, $\psi=0$ implies $A_2=0$ and therefore $A \in \bigwedge^2 H_c^{\vee}\otimes \bigwedge^2 V^{\vee}$.

\nii On the other hand, for each $A \in \bigwedge^2 H_c^{\vee} \otimes \bigwedge^2 V^{\vee}$, it follows from the previous paragraph that $(\textrm{Id}\otimes ev) \circ A \circ (\textrm{Id}\otimes ev^{\vee})=0$, therefore we can associate the monad

\begin{equation}\label{AMonad}
\xymatrix{\cM_A: H_c \otimes \cO_{\PP^n}(-1)\ar[r]^-{a_A}&  W \otimes \cO_{\PP^n}  \ar[rr]^-{a_A^{\vee}\circ (q_A\otimes \textrm{Id})} & &H^{\vee}_c \otimes \cO_{\PP^n}(1),}
\end{equation}
\nii whose cohomology sheaf is defined by

\begin{equation}\label{EAdef}
\cE_A:=\dfrac{\textrm{Ker }(a_A^{\vee}\circ (q_A\otimes Id))}{\textrm{Im } a_A}.
\end{equation}

\end{proof}

Recall that asking $\cE$ not to have global sections is equivalent to not having trivial summands in the vector bundle $\textrm{Ker }(a_A^{\vee}\circ (q_A\otimes Id))$ (see \cite{Arr2010} for more details).\smallskip\\

\nii Recall also that $A$ is called {\bf non-degenerate} if $A(h \otimes v) \neq 0$ for any non-zero decomposable tensor $h\otimes v\in H_c \otimes V$. Hence, similar to \cite{CTT2003}, with the notation of the previous proof, the followings are equivalent:

\begin{itemize}
\item[(i)] $a_A^{\vee}\circ (q_A\otimes \textrm{Id})$ is surjective;
\item[(ii)] the image of $a_A$ is a subbundle;
\item[(iii)] $A$ is non-degenerate.
\end{itemize}

\nii From all the previous observations, the map $A$ defined in \eqref{Adef} has the following properties:

\begin{itemize}
\item[(A1)] $\textrm{rank } (A: H_c\otimes V \rightarrow H_c^{\vee}\otimes V^{\vee}) = 2c+r$;
\item[(A2)] $A$ is non degenerate;
\item[(A3)] there exists a symmetric isomorphism $\xymatrix{q_A: W \ar[r]^-{\cong}& W^{\vee}}$, where $W=\dfrac{H_c\otimes V}{\textrm{Ker } A}$.
\end{itemize}

\nii Consider the set
\begin{equation*}
\cA[c,r] :=\left\{  A \in \bigwedge^2 H_c^{\vee}\otimes \bigwedge^2V^{\vee} ; \textrm { such that (A1)-(A3) holds}\right\},
\end{equation*}
\nii our next goal is to prove a bijection between the sets $\cA[c,r]$ and $\EE[c,r]$. To do so, we will need the next result.

\begin{lem}\label{ctt}
For any $A\in \cA[c,r]$, there are isomorphisms
\begin{equation*}
\begin{array}{ccc}
H_c\cong\textrm{H}^{n-1}(\cE_A\otimes \Omega^{n}(1))&W\cong \textrm{H}^1(\cE_A\otimes \Omega^{1}) &\textrm{Ker }A^{\vee}\cong \textrm{H}^1(\cE_A)\\
H_c^{\vee}\cong \textrm{H}^1(\cE_A(-1))&W^{\vee}\cong \textrm{H}^{n-1}(\cE_A\otimes \Omega^{n-1})&
\end{array}
\end{equation*}

\nii which are compatible with the Serre duality and the orthogonal structure $\cE_A\cong \cE_A^{\vee}$, and give the following commutative diagram

\begin{equation*}
{\tiny \xymatrix{H_c\otimes V\ar[r]\ar[d]^{\cong}& W^{\vee}\ar[d]^{\cong}& W\ar[l]_-{q_A}\ar[r]\ar[d]^{\cong}& H_c^{\vee}\otimes V^{\vee}\ar[r]\ar[d]^{\cong}& \textrm{Ker }A^{\vee}\ar[r]\ar[d]^{\cong}&0\\
\textrm{H}^{n-1}(\cE_A\otimes \Omega^{n}(1))\otimes V \ar[r]& \textrm{H}^{n-1}(\cE_A\otimes \Omega^{n-1})& \textrm{H}^1(\cE_A\otimes \Omega^{1})\ar[l]^-{\cong}\ar[r]& \textrm{H}^1(\cE_A(-1))\otimes V^{\vee}\ar[r]& \textrm{H}^1(\cE_A)\ar[r]&0} }
\end{equation*}
\end{lem}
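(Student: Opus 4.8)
The plan is to extract these identifications directly from the monad $\cM_A$ in \eqref{AMonad}, by retracing the cohomological computations of Lemma \ref{help} in reverse. First I would recall that for the monad $\cM_A\colon H_c\otimes\cO(-1)\to W\otimes\cO\to H_c^\vee\otimes\cO(1)$ with cohomology sheaf $\cE_A$, the standard monad display splits into the two short exact sequences $0\to K\to W\otimes\cO\to H_c^\vee\otimes\cO(1)\to 0$ and $0\to H_c\otimes\cO(-1)\to K\to\cE_A\to 0$, where $K=\operatorname{Ker}(a_A^\vee\circ(q_A\otimes\mathrm{Id}))$. Twisting these sequences by $\Omega^i_{\PP^n}$ and by $\cO(-k)$ as appropriate and taking the long exact cohomology sequences — using Bott's formulas for $\textrm{H}^j(\Omega^i(t))$ and the vanishing $\textrm{H}^0(\cE_A)=\textrm{H}^n(\cE_A(-n))=0$ guaranteed because $\cE_A$ has no trivial summands — I would read off $\textrm{H}^{n-1}(\cE_A\otimes\Omega^n(1))\cong H_c$, $\textrm{H}^1(\cE_A(-1))\cong H_c^\vee$, $\textrm{H}^1(\cE_A)\cong\operatorname{Ker}A^\vee$, and the two middle isomorphisms $\textrm{H}^1(\cE_A\otimes\Omega^1)\cong W\cong\textrm{H}^{n-1}(\cE_A\otimes\Omega^{n-1})^\vee$, exactly mirroring the sequences \eqref{omegai}, \eqref{omegaii}, \eqref{omegaiii}, \eqref{cttA} used in the proof of Lemma \ref{help} (this is precisely the argument of \cite{CTT2003} adapted to our setting, which is why the statement says ``similar to \cite{CTT2003}'').

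Next I would establish the commutativity of the displayed diagram. The top row is the defining diagram \eqref{defqA} of $A$ and $q_A$ together with the projection $H_c^\vee\otimes V^\vee\twoheadrightarrow\operatorname{Ker}A^\vee$; the bottom row is the $\Omega^\bullet$-version of \eqref{Aprime}, i.e. the sequence obtained by applying $\textrm{H}^\bullet(\cE_A\otimes-)$ to the Koszul/Euler filtration \eqref{Eulerseq}--\eqref{koszul} tensored suitably. The vertical arrows are the five isomorphisms just constructed. Commutativity of each square then follows from naturality of the connecting homomorphisms in the cohomology long exact sequences with respect to the morphisms of short exact sequences coming from the monad display, exactly as \eqref{Adef} was built as a composition of natural maps; the only subtlety is keeping track of the twisting isomorphisms $\tau_1,\tau_2$ of \eqref{tau12} and the Serre-duality identification $\omega_{\PP^n}\cong\bigwedge^{n+1}V^\vee\otimes\cO(-n-1)$, which enter the leftmost square.

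Finally, compatibility with Serre duality and with the orthogonal structure: the symmetric isomorphism $q_A\colon W\xrightarrow{\ \cong\ }W^\vee$ of property (A3) transports, under the two middle vertical isomorphisms, to a map $\textrm{H}^1(\cE_A\otimes\Omega^1)\to\textrm{H}^{n-1}(\cE_A\otimes\Omega^{n-1})$; one checks this is the Serre-duality pairing composed with the cohomology map induced by an orthogonal structure $\phi_A\colon\cE_A\xrightarrow{\cong}\cE_A^\vee$ on $\cE_A$ — the existence of such a $\phi_A$ being forced precisely by the symmetry of $A$ (and hence of $q_A$), since a symmetric $q_A$ makes the monad $\cM_A$ self-dual, and self-duality of the monad descends to an isomorphism $\cE_A\cong\cE_A^\vee$ which is symmetric because $q_A^\vee=q_A$. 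I expect the main obstacle to be this last bookkeeping step: verifying that the self-dual structure on the \emph{monad} coming from $q_A=q_A^\vee$ really does induce an \emph{orthogonal} (as opposed to symplectic) structure on $\cE_A$ compatible with all five identifications simultaneously, i.e. that all the sign and transpose conventions in the chain \eqref{Adef} line up so that the diagram is genuinely $\phi_A$-equivariant. Everything else is a routine, if lengthy, diagram chase using Bott vanishing and naturality.
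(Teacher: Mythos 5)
Your argument is correct, but it takes a genuinely different route from the paper's. You compute the cohomology groups $\textrm{H}^1(\cE_A\otimes\Omega^1_{\PP^n})$, $\textrm{H}^1(\cE_A(-1))$, $\textrm{H}^{n-1}(\cE_A\otimes\Omega^{n}_{\PP^n}(1))$, etc.\ directly from the display of the monad \eqref{AMonad}, twisting the two short exact sequences by $\Omega^i_{\PP^n}(t)$ and invoking Bott vanishing — this is the computation of \cite{CTT2003} run forwards, and it does yield all five identifications (e.g.\ $\textrm{H}^1(K\otimes\Omega^1_{\PP^n})\cong W\otimes\textrm{H}^1(\Omega^1_{\PP^n})\cong W$ and then $\textrm{H}^1(\cE_A\otimes\Omega^1_{\PP^n})\cong\textrm{H}^1(K\otimes\Omega^1_{\PP^n})$ since $\Omega^1_{\PP^n}(-1)$ has no cohomology for $n\geq 3$). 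The paper instead applies the Beilinson spectral sequence to $\cE_A(-1)$ to produce a second monad \eqref{moncEA} with cohomology $\cE_A$, and obtains all the identifications at once from the isomorphism of the two monads; commutativity of the diagram and compatibility with Serre duality then come for free from the functoriality of the Beilinson construction and of Serre duality, rather than from the naturality-of-connecting-maps chase you propose. Your route is more self-contained and makes the role of Bott vanishing explicit, but it pushes the burden onto exactly the bookkeeping you flag at the end (tracking $\tau_1,\tau_2$, the Serre-duality identifications, and the signs relating $q_A$ to the induced pairing); the Beilinson route hides that bookkeeping inside the uniqueness of the minimal monad, and the orthogonal structure $\phi_A$ itself is only constructed afterwards, in the proof of Theorem \ref{thmequivalence}, from the self-duality $\cM_A\cong\cM_A^{\vee}$ induced by $q_A=q_A^{\vee}$ — exactly as you anticipate. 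Both arguments are sound; yours would just need the diagram chase written out in full to be complete.
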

\begin{proof}
\nii Given $A\in \cA[c,r]$ we have the monad
\begin{equation}\label{mon1}\xymatrix{\cM_A: H_c \otimes \cO_{\PP^n}(-1)\ar[r]^-{a_A}&  W \otimes \cO_{\PP^n}  \ar[rr]^-{a_A^{\vee}\circ (q_A\otimes \textrm{Id})} & &H^{\vee}_c \otimes \cO_{\PP^n}(1),}
\end{equation}
\nii whose cohomology bundle is $\cE_A$. On the other hand, applying the Beilinson spectral sequence to $\cE_A(-1)$, one has the monad
$$\small{\xymatrix{0\ar[r]&\textrm{H}^1(\cE_A(1)\otimes \Omega_{\PP^n}^2)\otimes \cO_{\PP^n}(-2)\ar[r]^-{d_1^{-2,1}} & \textrm{H}^1(\cE_A\otimes \Omega_{\PP^n}^1)\otimes \cO_{\PP^n}(-1)\ar[r]^-{d_1^{-1,1}}&\textrm{H}^1(\cE_A(-1))\otimes \cO_{\PP^n}\ar[r]& 0}},$$

\nii and tensoring this monad by $\cO_{\PP^n}(1)$, we obtain the monad
\begin{equation}\label{moncEA}
\small{\xymatrix{0\ar[r]&\textrm{H}^1(\cE_A(1)\otimes \Omega_{\PP^3}^2)\otimes \cO_{\PP^n}(-1)\ar[r]^-{d_1^{-2,1}} & \textrm{H}^1(\cE_A\otimes \Omega_{\PP^n}^1)\otimes \cO_{\PP^n}\ar[r]^-{d_1^{-1,1}}&\textrm{H}^1(\cE_A(-1))\otimes \cO_{\PP^n}(1)\ar[r]& 0}},
\end{equation}
\nii whose cohomology is isomorphic to $\cE_A$.\\

\nii Obviously, $\cE_A\cong \cE_A$ thus we have the isomorphism of the monads \eqref{mon1} and \eqref{moncEA}, which gives us the isomorphisms $H_c\cong\textrm{H}^{1}(\cE_A\otimes \Omega^{2}(1))\cong \textrm{H}^{n-1}(\cE_A\otimes \Omega^{n}(1))$, $W\cong \textrm{H}^1(\cE_A\otimes \Omega^{1})$ and $H_c^{\vee}\cong \textrm{H}^1(\cE_A(-1))$. By Serre duality, we have $W^{\vee}\cong \textrm{H}^1(\cE_A\otimes \Omega^{1})^{\vee}\cong \textrm{H}^{n-1}(\cE_A\otimes \Omega^{n-1})$, and the last isomorphism follows from \eqref{defqA} and \eqref{cttA}.

\nii Finally, the commutativity of the diagram follows from the functoriality of Serre-duality.
\end{proof}

\nii Thanks to the previous lemma, we have the following result.

\begin{thm}\label{thmequivalence}	There exists a bijection between the equivalence classes $[\cE, \phi, f] \in \EE[c,r]$ of orthogonal instanton bundles of charge $c$, rank $r$, with no global sections on $\PP^n $ ($n\geq 3$) and the elements $A \in \cA[c,r]$.
\end{thm}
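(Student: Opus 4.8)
The plan is to exhibit mutually inverse maps $\Phi\colon\EE[c,r]\to\cA[c,r]$ and $\Psi\colon\cA[c,r]\to\EE[c,r]$. For $\Phi$ I would send a triple $(\cE,\phi,f)$ to the morphism $A$ produced in \eqref{Adef} of Lemma~\ref{help}; by the discussion following that lemma (the equivalences (i)--(iii) and properties (A1)--(A3)) this $A$ lies in $\cA[c,r]$. To see that $\Phi$ descends to equivalence classes, given an isomorphism $g\colon\cE_1\to\cE_2$ as in Definition~\ref{defeq} I would substitute the relations $g^\vee\circ\phi_2\circ g=\phi_1$ and $g_\ast\circ f_1=f_2\circ(\lambda\,\mathrm{Id}_c)$, $\lambda\in\{-1,1\}$, into \eqref{Adef} and chase the maps induced by $g$ on the cohomology groups of \eqref{Aprime} and \eqref{cttA}: by functoriality of Serre duality and of the connecting homomorphisms of the Euler and Koszul sequences every occurrence of $g,g^\vee,g_\ast$ cancels, while the two occurrences of $\lambda$ (one from $f$, one from $f^\vee$) cancel since $\lambda^2=1$, so $\Phi([\cE_1,\phi_1,f_1])=\Phi([\cE_2,\phi_2,f_2])$.

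For $\Psi$ I would send $A\in\cA[c,r]$ to $[\cE_A,\phi_A,f_A]$, where $\cE_A$ is the cohomology bundle \eqref{EAdef} of the monad $\cM_A$ of \eqref{AMonad}, the map $\phi_A\colon\cE_A\to\cE_A^\vee$ is the one induced on cohomology by the self-dual datum $(a_A,q_A)$ of the monad --- it is an orthogonal structure because $q_A$ is symmetric --- and $f_A\colon H_c\to\textrm{H}^{n-1}(\cE_A(-n))$ is the isomorphism of Lemma~\ref{ctt} (using $\Omega^n_{\PP^n}(1)\cong\cO_{\PP^n}(-n)$). Then I would check that this triple is admissible: by (A2) the outer maps of $\cM_A$ form a subbundle inclusion and a surjection, so $\cE_A$ is locally free of rank $(2c+r)-2c=r$; the long exact sequences of the monad give $c_1(\cE_A)=0$, the vanishings of Definition~\ref{definst}, and $-\chi(\cE_A(-1))=c$; and $\textrm{H}^0(\cE_A)=0$ automatically, since on global sections the map $a_A^\vee\circ(q_A\otimes\mathrm{Id})$ of $\cM_A$ induces $p^\vee\circ q_A$, which is injective as $q_A$ is an isomorphism and $p^\vee$ is injective (cf.\ the remark after Lemma~\ref{help}). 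Hence $\Psi$ is well defined.

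It then remains to show $\Phi\circ\Psi=\mathrm{id}$ and $\Psi\circ\Phi=\mathrm{id}$. For the first, starting from $A\in\cA[c,r]$, Lemma~\ref{ctt} identifies $\cM_A$ with the twisted Beilinson monad of $\cE_A$ through a commutative diagram of isomorphisms compatible with Serre duality and with $\phi_A$; running the construction of Lemma~\ref{help} on $(\cE_A,\phi_A,f_A)$ produces $\widetilde A$ as the composite \eqref{Adef}, whose ingredients ($\tau_1$, $f_A$, the connecting isomorphism $\partial$ and the edge maps $i_2,i_2^\vee$ of \eqref{Aprime} assembling $A'$, $\phi_A$, Serre duality, $\tau_2$, $f_A^\vee$) are exactly the arrows of that diagram, so transporting the composite across it gives $\widetilde A=A$ (the choice of $\tau$ being immaterial by the remark after \eqref{Adef}). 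For the second, starting from $[\cE,\phi,f]$ with $A=\Phi([\cE,\phi,f])$, the diagram \eqref{DiagA} --- obtained by splicing the Euler sequence \eqref{Eulerseq} with the Koszul sequences \eqref{koszuli}--\eqref{koszul} tensored by $\cE$ --- exhibits $\cM_A$ as a monad with cohomology $\cE$, yielding a canonical isomorphism $g\colon\cE\to\cE_A$; since $q_A$ is defined in \eqref{defqA} from $A$, hence from $\phi$ and Serre duality, $g$ intertwines $\phi$ with $\phi_A$, and it intertwines $f$ with $f_A$ up to the sign $\lambda\in\{-1,1\}$ by which the two natural identifications of $\textrm{H}^{n-1}(\cE(-n))$ may differ; thus $[\cE_A,\phi_A,f_A]=[\cE,\phi,f]$ in $\EE[c,r]$.

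The main obstacle is not building $\Phi$ and $\Psi$ but verifying that the correspondence is exact on the nose: one must track the Serre-duality isomorphisms and the orthogonal structure, with their signs, through the Beilinson spectral sequence and the Euler/Koszul filtrations, in order to conclude $\widetilde A=A$ rather than merely $\widetilde A\cong A$, and to confirm that the only residual ambiguity on the framing is precisely the sign allowed in Definition~\ref{defeq}. Lemmas~\ref{help} and~\ref{ctt} are arranged so that all of these compatibilities hold, so what remains is a lengthy but essentially mechanical diagram chase.
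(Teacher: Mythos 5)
Your proposal follows the same route as the paper: the forward map is exactly the construction of Lemma~\ref{help}, the inverse recovers the triple $(\cE_A,\phi_A,f_A)$ from the monad $\cM_A$ (with $\phi_A$ induced by the symmetric $q_A$ and $f_A$ from $\textrm{H}^{n-1}(\cE_A(-n))\cong \textrm{H}^n(H_c\otimes\cO_{\PP^n}(-n-1))$), and Lemma~\ref{ctt} identifies $\cM_A$ with the Beilinson monad of $\cE$ to show the two constructions invert one another. You are somewhat more explicit than the paper about well-definedness on equivalence classes and about checking that $\cE_A$ is an orthogonal instanton bundle with no global sections, but the strategy and the key ingredients are identical.
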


\begin{proof}
\nii By Lemma \ref{help} given an equivalence class $[\cE, \phi, f]\in \EE[c,r]$, there exists $A \in \bigwedge^2 H_c^{\vee}\otimes \bigwedge^2V^{\vee}$ which satisfies (A1)-(A3). Thus $A \in \cA[c,r]$ and there exists a monad
\begin{equation}\label{mon}
\xymatrix{\cM_A: H_c \otimes \cO_{\PP^n}(-1)\ar[r]^-{a_A}&  W \otimes \cO_{\PP^n}  \ar[rr]^-{a_A^{\vee}\circ (q_A\otimes \textrm{Id})} & &H^{\vee}_c \otimes \cO_{\PP^n}(1)},
\end{equation}

\nii whose cohomology sheaf is denoted by $\cE_A$. On the other hand, by (\cite{Jar2005} - Theorem 3), $\cE$ is cohomology of the monad
\begin{equation}\label{mon2}
\small{\xymatrix{0\ar[r]&\textrm{H}^1(\cE(1)\otimes \Omega_{\PP^3}^2)\otimes \cO_{\PP^n}(-1)\ar[r]^-{d_1^{-2,1}} & \textrm{H}^1(\cE\otimes \Omega_{\PP^n}^1)\otimes \cO_{\PP^n}\ar[r]^-{d_1^{-1,1}}&\textrm{H}^1(\cE(-1))\otimes \cO_{\PP^n}(1)\ar[r]& 0}}.
\end{equation}

\nii By the Lemma \ref{ctt} the monads \eqref{mon} and \eqref{mon2} are isomorphic. Thus $A$ defines a monad whose cohomology sheaf $\cE_A$ is isomorphic to $\cE$.\\

\nii Tensoring $\cM_A$ by $\cO_{\PP^n}(-n)$ and using \eqref{EAdef}, we obtain $\textrm{H}^{n-1}(\cE_A(-n))\cong \textrm{H}^{n}(H_c \otimes \cO_{\PP^n}(-n-1))$. Note that $\textrm{h}^{n}(H_c \otimes \cO_{\PP^n}(-n-1))=c$, then there exists $\xymatrix{f_A:H_c \ar[r]^-{\cong}& \textrm{H}^{n-1}(\cE_A(-n)) }$.\\

\nii Furthermore, the symmetric map $q_A$ induces a canonical isomorphism of monads

$$\xymatrix{\cM_A:\ar[d]_-{\Phi_A}& H_c \otimes \cO_{\PP^n}(-1)\ar[r]^-{a_A} \ar[d]_{\textrm{Id}}&  W \otimes \cO_{\PP^n}  \ar[d]^{q_a \otimes \textrm{Id}} \ar[rr]^-{a_A^{\vee}\circ (q_A\otimes \textrm{Id})} & &H^{\vee}_n \otimes \cO_{\PP^n}(1)\ar[d]_{\textrm{Id}}\\
\cM_A^{\vee}: &H_c \otimes \cO_{\PP^n}(-1)\ar[r]_-{(q_A\otimes \textrm{Id})\circ a_A}&  W^{\vee} \otimes \cO_{\PP^n}  \ar[rr]_-{a_A^{\vee}} & &H^{\vee}_c \otimes \cO_{\PP^n}(1)}
$$

\nii which induces a symmetric isomorphism of vector bundles $\xymatrix{\phi_A:\cE_A \ar[r]^-{\cong}& \cE_A^{\vee} }$.

\nii Thus, the data $[\cE_A,\phi_A,f_A]$ can be recovered from $A$.

\end{proof}

\nii By Theorem \ref{thmequivalence} the existence of orthogonal instanton bundles with charge $c$, rank $r$ and no global sections on $\PP^n$ is related to the existence of symmetric and non-degenerate linear maps. This approach is extremely helpful in the proof of the next result.   \\

\begin{pps}\label{mainthm} Let $c\geq 3$ be an integer. Every orthogonal instanton bundle $\cE$ on $\PP^n$ ($n\geq 3$) with no global sections and charge $c$ has rank $r\leq (n-1)c$. Moreover, there are no orthogonal instanton bundles $\cE$ on $\PP^n$ ($n\geq 3$) with no global sections and charge $c$ equal to $1$ or $2$.
\end{pps}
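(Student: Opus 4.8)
The plan is to move the whole question to the linear side provided by Theorem~\ref{thmequivalence}. By that theorem an orthogonal instanton bundle $\cE$ on $\PP^n$ of charge $c$, rank $r$ and with no global sections, once endowed with an orthogonal structure and a framing, is the same datum as an element $A\in\cA[c,r]\subseteq\bigwedge^{2}H_c^{\vee}\otimes\bigwedge^{2}V^{\vee}$; in particular such a bundle exists if and only if $\cA[c,r]\neq\emptyset$, and any $A\in\cA[c,r]$ satisfies (A1), that is $\rk\bigl(A\colon H_c\otimes V\to H_c^{\vee}\otimes V^{\vee}\bigr)=2c+r$. The bound $r\le(n-1)c$ is then immediate: $A$ is a linear map out of the $c(n+1)$-dimensional space $H_c\otimes V$, so $2c+r=\rk A\le c(n+1)$ and hence $r\le cn-c=(n-1)c$. (This is also the nonnegativity of $h^{1}(\cE)=(n-1)c-r$ read off the cohomology table of Section~2, but the monad picture makes it transparent.) So the remaining content is the nonexistence statement for $c=1$ and $c=2$, which I would again settle inside $\cA[c,r]$.

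For $c=1$ the space $H_c$ is one-dimensional, hence $\bigwedge^{2}H_1^{\vee}=0$ and therefore $\bigwedge^{2}H_1^{\vee}\otimes\bigwedge^{2}V^{\vee}=0$. The only candidate is $A=0$, which is not non-degenerate, and moreover its rank $0$ cannot equal $2c+r\ge 3$; so (A1) and (A2) both fail and $\cA[1,r]=\emptyset$ for every $r$. Hence no orthogonal instanton bundle of charge $1$ with no global sections exists.

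For $c=2$, which I expect to be the heart of the argument, $\bigwedge^{2}H_2^{\vee}$ is one-dimensional; fixing a generator $\eta$ I would write every $A\in\bigwedge^{2}H_2^{\vee}\otimes\bigwedge^{2}V^{\vee}$ as $A=\eta\otimes\omega$ with $\omega\in\bigwedge^{2}V^{\vee}$; here $\eta$, regarded as a map $H_2\to H_2^{\vee}$, is an isomorphism, so $\rk\eta=2$. Then on a nonzero decomposable tensor one has $A(h\otimes v)=\eta(h)\otimes\iota_{v}\omega$, which vanishes exactly when $\iota_{v}\omega=0$; thus condition (A2) for $A$ is equivalent to $\omega$ being a non-degenerate alternating form on the $(n+1)$-dimensional space $V$. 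Two facts are then forced: first $\rk A=(\rk\eta)(\rk\omega)=2\rk\omega$, so by (A1) $r=2\rk\omega-4\le 2n-2$, consistently with the first part; and second, non-degeneracy of $\omega$ requires $\rk\omega=n+1$, which is impossible when $n$ is even. This already rules out every even $n$, and for odd $n$ it leaves only the single numerical possibility $\rk\omega=n+1$, $r=2n-2=(n-1)c$.

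Excluding that last possibility is the step I expect to be the real obstacle, since it is invisible to a dimension count: all of (A1)--(A3) remain numerically satisfiable, so one must exploit the special geometry of a map $A=\eta\otimes\omega$ with $\omega$ symplectic. Concretely I would analyse the monad $\cM_A$ attached to such an $A$: the form $\omega$ identifies $V\otimes\cO_{\PP^n}$ with $V^{\vee}\otimes\cO_{\PP^n}$ and so filters it by the tautological line subbundle $\cO_{\PP^n}(-1)$ and its $\omega$-annihilator, forcing the cohomology bundle $\cE_A$ into a very degenerate shape; one then checks that the bundle produced this way violates one of the properties required of the objects under consideration. This is where the essential work lies; the rest of the proof is bookkeeping on $\cA[c,r]$.
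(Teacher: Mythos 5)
Your rank bound and your treatment of $c=1$ coincide with the paper's argument: both are immediate from property (A1), $\rk A=2c+r\leq\dim(H_c\otimes V)=(n+1)c$, and from $\bigwedge^2H_1^{\vee}=0$. The genuine gap is exactly where you place it: the case $c=2$, $n$ odd is left unresolved. You correctly reduce to $A=\eta\otimes\omega$ with $\eta$ a generator of $\bigwedge^2H_2^{\vee}$ and $\omega\in\bigwedge^2V^{\vee}$, rule out even $n$ by the parity of the rank of a skew form, and then offer only a sketch (``analyse the monad $\cM_A$\dots one then checks\dots'') for the remaining case. Since an entire case of the assertion is not proved, the proposal is incomplete as it stands.

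You should know, though, how the paper handles this case, because the comparison is instructive. The paper's proof of the $c=2$ claim is one line: $\bigwedge^2H_2^{\vee}\otimes\bigwedge^2V^{\vee}\cong\KK\otimes\bigwedge^2V^{\vee}$, hence ``$A$ is skew-symmetric, but $A$ is also symmetric, hence $A$ is the zero map.'' Your more careful computation shows why this step deserves scrutiny: under that identification it is $\omega$, not $A$, that is skew; the map $A=\eta\otimes\omega$ is the tensor product of two skew forms and is therefore \emph{symmetric} as a bilinear form on $H_2\otimes V$, and for $n$ odd with $\omega$ nondegenerate it is a nonzero element of $\bigwedge^2H_2^{\vee}\otimes\bigwedge^2V^{\vee}$ satisfying (A1)--(A3), i.e.\ it lies in $\cA[2,2n-2]$ as that set is defined. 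Concretely, for $n=3$ the associated monad is $H_2\otimes\bigl(\cO_{\PP^3}(-1)\to V\otimes\cO_{\PP^3}\to\cO_{\PP^3}(1)\bigr)$, whose cohomology is $H_2\otimes N$ with $N$ the null-correlation bundle; this carries the symmetric isomorphism $\eta\otimes\sigma$ ($\sigma$ the symplectic form on $N$), has charge $2$, rank $4=(n-1)c$, and no global sections. So the case you could not exclude is not excluded by the paper's formal symmetry argument either: to close your gap one would have to identify some additional property of the maps $A$ actually produced by Lemma \ref{help} that kills $\eta\otimes\omega$, or else the charge-$2$ nonexistence claim itself must be re-examined. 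In short, your proof has a real gap at $c=2$, $n$ odd, but your analysis of that case is sharper than the paper's, and it correctly isolates the one point where the argument cannot be pure bookkeeping.
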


\begin{proof}
\nii First suppose that there exists an orthogonal instanton bundle $\cE$ with no global sections, charge $c$ and rank $r$ over $\PP^n$ and consider its equivalence class $[\cE,\phi, f]$. By Theorem \ref{thmequivalence} there exists $A \in \cA[c,r]$ associated with $[\cE,\phi, f]$.

\nii Given $A \in \cA[c,r]$, with some abuse of notation, let us also denote by $A$ the matrix associated with the morphism $\xymatrix{A:H_c\otimes V \ar[r]& H_c^{\vee}\otimes V^{\vee}}$. So $\textrm{rank }{A}=2c+r\leq (n+1)c,$ implying $\textrm{rank } \cE=r\leq (n-1)c$.

\nii To conclude the proof, if $c=1$, then $A \in\bigwedge^2 H_1^{\vee}\otimes \bigwedge^2V^{\vee} \cong 0$, but the zero map is degenerate.

\nii If $c=2$, then $A \in\bigwedge^2 H_2^{\vee}\otimes \bigwedge^2V^{\vee}\cong \KK \otimes \bigwedge^2V^{\vee}$, so
$A$ is skew-symmetric, but $A$ is also symmetric, hence $A$ is the zero map.

\nii So there are no orthogonal instanton bundles, with no global sections and charge $1$ or $2$ on $\PP^n$.\\

\end{proof}

\nii Now, with the help of Macaulay2, see \cite{M2}, we will construct explicit examples of orthogonal instanton bundles on $\PP^n$ when $r$ reaches the upper bound. Let us start by explaining the consequences of the results obtained.

\nii Proposition \ref{mainthm} and diagram \eqref{defqA} imply that $A\cong q_A$. Moreover, we have

\begin{equation*}
\xymatrix{a^{\vee}_A: W^{\vee} \otimes \cO_{\PP^3} \ar[r]^-{\textrm{Id}}&H^{\vee}_n \otimes V^{\vee}\otimes \cO_{\PP^3} \ar[r]^-{ev}& H^{\vee}_n \otimes \cO_{\PP^3}(1).}
\end{equation*}

\nii So if $\{x_0, x_1, \ldots, x_n \}$ is a basis of $V^{\vee}$, we have the monad

\begin{equation}\label{monadexe}
\xymatrix{\cM_A: H_c \otimes \cO_{\PP^n}(-1)\ar[r]^-{a_A}&  W \otimes \cO_{\PP^n}  \ar[rr]^-{a_A^{\vee}\circ (A\otimes \textrm{Id})} & &H^{\vee}_c \otimes \cO_{\PP^n}(1)},
\end{equation}

\nii where $a_A^{\vee}$ is given by

$$
\displaystyle a^{\vee}_A =\left( \tiny \begin{array}{cccccccccccccccccc}
x_0 & x_1 & \ldots & x_n &0&0&\ldots&0&&\cdots&&&&&&&&\\
0&0&\ldots&0&x_0 & x_1 & \ldots & x_n & &\cdots&&&&&&&&\\
&&&&&&&&&\ddots&&&&&&\\
&&&&&& &&& \cdots&0&0&\ldots&0&x_0 & x_1 & \ldots & x_n
\end{array}  \right)_{(n+1)\times(n+1)c.}
$$

\nii Theorem \ref{thmequivalence} simplifies the search for orthogonal instanton bundles and translates our existence problem in a linear algebra problem: we have to look for invertible matrices in $\bigwedge^2 H_c^{\vee} \otimes \bigwedge^2 V^{\vee}$. Recall that every skew-symmetric matrix $M$ can be written as a block diagonal matrix

\begin{equation} \label{skewmatrix}
\left(\begin{array}{cccccccccc}
0 &  & \lambda_1 &  &  & &  &  &  &  \\
 &  &  &  & 0 &  &  &  &  &  \\
-\lambda_1 &  & 0 &  &  &  & \cdots &  & 0 &  \\
 &  &  & 0 &  & \lambda_2 & &  &  &  \\
 & 0 &  &  &  &  &  &  & 0 &  \\
 &  &  & -\lambda_2 &  & 0 &  &  &  &  \\
 & \vdots &  & &  &  & \ddots &  & \vdots &  \\
 &  &  &  &  &  &  & 0 &  & \lambda_l \\
 & 0 &  &  & 0 &  & \cdots &  &  &  \\
 &  &  &  &  &  &  & -\lambda_l &  & 0
\end{array} \right)_{2l\times 2l}.
\end{equation}

\nii where $\pm i\lambda_i$ are the non-zero eigenvalues of $M$.
\nii In order to build examples of orthogonal instanton bundles with even charge $c$ on $\PP^n$, with $n$ odd, we can take two matrices $B$ and $C$ as in \eqref{skewmatrix}, where:

\begin{itemize}
\item $B$ is a $c \times c$ skew-symmetric matrix;
\item $C$ is a $(n+1) \times (n+1)$ skew-symmetric matrix.
\end{itemize}

\nii So if we consider $A=B \otimes C$, then $A \in \bigwedge^2 H_c^{\vee} \otimes \bigwedge^2 V^{\vee}$.

\begin{exm}\label{c6p3}
\nii Let us construct an example of orthogonal instanton bundle with no global sections and charge $6$ on $\PP^3$. Let $\{x_0,x_1,x_2,x_3\}$ be a basis for $V^{\vee}$. Consider \begin{center}
$B=\left( \begin{array}{cccccc}
0&2&0&0&0&0\\
-2&0&0&0&0&0\\
0&0&0&-1&0&0\\
0&0&1&0&0&0\\
0&0&0&0&0&1\\
0&0&0&0&-1&0
\end{array}
\right)$ and
$C=\left( \begin{array}{cccc}
0&1&0&0\\
-1&0&0&0\\
0&0&0&-3\\
0&0&3&0
\end{array}
\right).$
\end{center}

\nii Let $A= B\otimes C \in \bigwedge^2 H_6^{\vee} \otimes \bigwedge^2 V^{\vee}$. We have that $\textrm{rank } A=24$, so $A$ is invertible and therefore non degenerate. By Theorem \ref{thmequivalence} and Proposition \ref{mainthm} we have the linear monad

\begin{equation*}
\xymatrix{\cO_{\PP^3}^6(-1)\ar[r]^-{\alpha}& \cO_{\PP^3}^{24} \ar[r]^-{\beta}& \cO_{\PP^3}^6(1)
}
\end{equation*}

\nii where
\begin{center}
$\alpha=\left( \tiny \begin{array}{cccccc}
x_0 & 0 & 0& 0 & 0& 0\\
x_1 & 0 & 0 & 0& 0& 0\\
x_2 & 0 & 0 & 0& 0& 0\\
x_3 & 0 & 0& 0& 0& 0 \\
0 & x_0 & 0 & 0& 0& 0\\
0 & x_1 & 0 & 0& 0& 0\\
0 & x_2 & 0 & 0& 0& 0\\
0 & x_3 & 0 & 0& 0& 0\\
0 & 0 & x_0 & 0& 0& 0\\
0 & 0 & x_1 & 0& 0& 0\\
0 & 0 & x_2 & 0& 0& 0\\
0 & 0 & x_3& 0& 0& 0\\
0 &0 & 0 & x_0& 0& 0\\
0 &0 & 0 & x_1& 0& 0\\
0 &0 & 0 & x_2& 0& 0\\
0 &0 & 0 & x_3& 0& 0\\
0 &0 &0 & 0 & x_0 & 0\\
0 & 0 &0 &0 & x_1 & 0\\
0 & 0 &0 &0 & x_2 & 0\\
0 & 0 & 0 &0 &x_3& 0\\
0 &0 & 0 & 0 &0 &x_0\\
0 &0 & 0 & 0 &0 &x_1\\
0 &0 & 0 & 0 &0 &x_2\\
0 &0 & 0 & 0 &0 &x_3

\end{array} \right)$,
$\beta^t =\left( \tiny \begin{array}{cccccc}
0&2x_1&0&0&0&0\\
0&-2x_0&0&0&0&0\\
0&-6x_3&0&0&0&0\\
0&6x_2&0&0&0&0\\
-2x_1&0&0&0&0&0\\
2x_0&0&0&0&0&0\\
6x_3&0&0&0&0&0\\
-6x_2&0&0&0&0&0\\
0&0&0&-x_1&0&0\\
0&0&0&x_0&0&0\\
0&0&0&-3x_3&0&0\\
0&0&0&3x_2&0&0\\
0&0&x_1&0&0&0\\
0&0&x_0&0&0&0\\
0&0&-3x_3&0&0&0\\
0&0&3x_2&0&0&0\\
0&0&0&0&0&x_1\\
0&0&0&0&0&-x_0\\
0&0&0&0&0&-3x_3\\
0&0&0&0&0&3x_2\\
0&0&0&0&-x_1&0\\
0&0&0&0&x_0&0\\
0&0&0&0&3x_3&0\\
0&0&0&0&-3x_2&0\\

\end{array}  \right)$
\end{center}

\nii and whose cohomology bundle is an orthogonal instanton bundle $\cE$ on $\PP^3$ with no global sections, charge $6$ and rank $12$ .\\
\end{exm}

\nii As we can see in the next example, when $c$ is odd or $n$ is even, we need to be a little more careful, because skew-symmetric matrices of odd order do not have complete rank.

\begin{exm}\label{c5p3}
\nii For $c=5$ and $n=3$, we consider
\begin{center}
$B_1=\left( \begin{array}{ccccc}
0&1&0&0&0\\
-1&0&1&0&0\\
0&-1&0&0&0\\
0&0&0&0&0\\
0&0&0&0&0
\end{array}
\right)$
$B_2=\left( \begin{array}{ccccc}
0&0&1&0&1\\
0&0&0&0&0\\
-1&0&0&0&0\\
0&0&0&0&1\\
-1&0&0&-1&0
\end{array}
\right)$
$B_3=\left( \begin{array}{ccccc}
0&0&0&0&0\\
0&0&0&1&0\\
0&0&0&0&1\\
0&-1&0&0&0\\
0&0&-1&0&0
\end{array}
\right)$
\end{center}

\begin{center}
$C_1=\left( \begin{array}{cccc}
0&1&0&0\\
-1&0&0&0\\
0&0&0&1\\
0&0&-1&0
\end{array}
\right)$
$C_2=\left( \begin{array}{cccc}
0&0&0&1\\
0&0&1&0\\
0&-1&0&0\\
-1&0&0&0
\end{array}
\right)$
$C_3=\left( \begin{array}{cccc}
0&0&0&1\\
0&0&0&0\\
0&0&0&1\\
-1&0&-1&0
\end{array}
\right).$
\end{center}

\nii Let $A= B_1\otimes C_1 + B_2\otimes C_2 +B_3\otimes C_3$. We have that $\textrm{rank }A=20$, so $A$ is invertible and non degenerate. By Theorem \ref{thmequivalence} and Proposition \ref{mainthm} we have the linear monad

\begin{equation*}\label{charge5}
\xymatrix{\cO_{\PP^3}^5(-1)\ar[r]^-{\alpha}& \cO_{\PP^3}^{20} \ar[r]^-{\beta}& \cO_{\PP^3}^5(1)
}
\end{equation*}

\nii where
\begin{center}
$\alpha=\left( \tiny \begin{array}{ccccc}
x_0 & 0 & 0& 0 &0\\
x_1 & 0 & 0 & 0& 0\\
x_2 & 0 & 0 & 0& 0\\
x_3 & 0 & 0& 0 & 0\\
0 & x_0 & 0 & 0& 0\\
0 & x_1 & 0 & 0& 0\\
0 & x_2 & 0 & 0& 0\\
0 & x_3 & 0 & 0& 0\\
0 & 0 & x_0 & 0& 0\\
0 & 0 & x_1 & 0& 0\\
0 & 0 & x_2 & 0& 0\\
0 & 0 & x_3& 0& 0\\
0 &0 & 0 & x_0& 0\\
0 &0 & 0 & x_1& 0\\
0 &0 & 0 & x_2& 0\\
0 &0 & 0 & x_3& 0\\
0 &0 &0 & 0 & x_0\\
0 &0 &0 & 0 & x_1\\
0 &0 &0 & 0 & x_2\\
0 &0 &0 & 0 & x_3
\end{array} \right)$ and
$\beta^t =\left( \tiny \begin{array}{ccccc}
0&x_1&x_3&0&x_3\\
0&-x_0&x_2&0&x_2\\
0&x_3&-x_1&0&-x_1\\
0&-x_2&-x_0&0&-x_0\\
-x_1&0&x_1&x_3&0\\
x_0&0&-x_0&0&0\\
-x_3&0&x_3&x_3&0\\
x_2&0&-x_2&-x_0-x_2&0\\
-x_3&-x_1&0&0&x_3\\
x_2&x_0&0&0&0\\
x_1&-x_3&0&0&x_3\\
x_0&x_2&0&0&-x_0-x_2\\
0&-x_3&0&0&-x_3\\
0&0&0&0&x_2\\
0&-x_3&0&0&-x_1\\
0&x_0+x_2&0&0&-x_0\\
-x_3&0&-x_3&-x_3&0\\
-x_2&0&0&-x_2&0\\
-x_3&0&-x_3&0&0\\
x_0&0&x_0+x_2&x_0&0\\

\end{array}  \right)$
\end{center}

\nii as constructed before. The vector bundle which is the cohomology of the monad \eqref{charge5} is an orthogonal instanton bundle $\cE$ on $\PP^3$ with no global sections, charge $5$ and rank $10$.\\

\end{exm}

\section{Moduli space}

\nii  In this section we will keep focusing on orthogonal instanton bundles with maximal possible rank. Our goal is to use geometric invariant theory (GIT) to construct $\cM^{\cO}_{\PP^n}(c, r)$, the moduli space of orthogonal instanton bundles with charge $c$, rank $r$ and no global sections on $\PP^n$, for $n,c\geq 3$. First notice when $r=(n-1)c$, the conditions (A1) and (A3) are superfluous, and we have

$$\cA[c,(n-1)c]=\{ A \in \bigwedge^2 H_c^{\vee}\otimes \bigwedge^2  V^{\vee} ;  A \textrm{ is non degenerate}\}.$$

\nii Denote $\EE_c=\EE[c,(n-1)c]$, $\cA_c=\cA[c,(n-1)c]$, $G=GL(H_c)$, and let $\widetilde{\EE_c}$ be the set of isomorphism classes $[\cE,\phi]$ such that $[\cE,\phi,f]\in \EE_c$. Consider the action

\begin{equation*}
\begin{array}{ccl}
\alpha: G \times  \bigwedge^2 H_c^{\vee}\otimes \bigwedge^2  V^{\vee}& \rightarrow & \bigwedge^2 H_c^{\vee}\otimes \bigwedge^2  V^{\vee}\\
(h,A)& \mapsto & (h\otimes \textrm{Id})A(h^{\vee}\otimes \textrm{Id}).
\end{array}
\end{equation*}

\begin{lem}\label{ginv}
The set $\cA_c$ is $G$-invariant subset of $\bigwedge^2 H_c^{\vee}\otimes \bigwedge^2  V^{\vee}$.
\end{lem}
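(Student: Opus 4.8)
The plan is to check directly that $\alpha(h,A)\in\cA_c$ for every $h\in G$ and every $A\in\cA_c$. Recall from the paragraph preceding the lemma that, since $r=(n-1)c$, conditions (A1) and (A3) are automatic, so
$$\cA_c=\{A\in\textstyle\bigwedge^2 H_c^{\vee}\otimes\bigwedge^2 V^{\vee} : A\text{ is non degenerate}\};$$
hence there are really only two things to verify: that $\alpha(h,A)$ still belongs to the subspace $\bigwedge^2 H_c^{\vee}\otimes\bigwedge^2 V^{\vee}$ (that is, that the action is well defined), and that $\alpha(h,A)$ is again non degenerate.

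For the first point I would observe that $\alpha(h,\cdot)$ is nothing but pre-composition with the invertible operator $h^{\vee}\otimes\textrm{Id}_V$ on $H_c\otimes V$ followed by post-composition with the invertible operator $h\otimes\textrm{Id}_{V^{\vee}}$ on $H_c^{\vee}\otimes V^{\vee}$; in particular it acts only on the $H_c$-slots and leaves the $V^{\vee}$-slots of the tensor untouched. Viewing $A$ as an element of $(H_c^{\vee}\otimes H_c^{\vee})\otimes(V^{\vee}\otimes V^{\vee})$, the $\bigwedge^2 V^{\vee}$-factor is therefore carried to itself verbatim, while on the $H_c$-side the operation is the pullback of the skew-symmetric bilinear form $A$ along the isomorphism $h^{\vee}$, which again yields a skew-symmetric form. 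Hence $\alpha(h,A)\in\bigwedge^2 H_c^{\vee}\otimes\bigwedge^2 V^{\vee}$.

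For non-degeneracy, take a nonzero decomposable tensor $x\otimes v\in H_c\otimes V$. Since $h^{\vee}$ is invertible, $(h^{\vee}x)\otimes v$ is again a nonzero decomposable tensor, so $A\big((h^{\vee}x)\otimes v\big)\neq 0$ because $A$ is non degenerate; applying the injective map $h\otimes\textrm{Id}_{V^{\vee}}$ gives $\alpha(h,A)(x\otimes v)\neq 0$. Thus $\alpha(h,A)$ is non degenerate, so $\alpha(h,A)\in\cA_c$, proving $G$-invariance. There is no genuine obstacle here; the only thing to be careful about is keeping track of which tensor slot $G$ acts on — so that the $\bigwedge^2 V^{\vee}$-component, and with it the decomposability of tensors, is left intact — and, incidentally, the same computation applied to $\textrm{Ker }A$ and to the induced symmetric isomorphism $q_A$ on $W=(H_c\otimes V)/\textrm{Ker }A$ shows more generally that each $\cA[c,r]$ is $G$-invariant.
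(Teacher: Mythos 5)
Your proof is correct, and for the first half it is essentially the paper's argument: the paper writes $A=\sum_i C_i\otimes D_i$ with $C_i\in\bigwedge^2 H_c^{\vee}$, $D_i\in\bigwedge^2 V^{\vee}$ and observes that $\alpha(h,A)=\sum_i (hC_ih^{\vee})\otimes D_i$ again lies in $\bigwedge^2 H_c^{\vee}\otimes\bigwedge^2 V^{\vee}$, which is just a coordinate version of your ``the action only touches the $H_c$-slots and conjugation preserves skew-symmetry.'' Where you go beyond the paper is the second half: the paper's proof stops after checking membership in the ambient subspace $\bigwedge^2 H_c^{\vee}\otimes\bigwedge^2 V^{\vee}$ and never verifies that non-degeneracy --- the actual defining condition of $\cA_c$ inside that subspace --- is preserved. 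Your argument (for $x\otimes v$ nonzero decomposable, $(h^{\vee}x)\otimes v$ is again nonzero decomposable, so non-degeneracy of $A$ plus injectivity of $h\otimes\mathrm{Id}$ gives $\alpha(h,A)(x\otimes v)\neq 0$) supplies exactly the step the paper leaves implicit, so your write-up is the more complete of the two.
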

\begin{proof}
\nii Let $h\in G$, $A \in \bigwedge^2 H_c^{\vee}\otimes \bigwedge^2  V^{\vee}$ and $B=\alpha(h, A)$ the image of $h$ and $A$ by the previous action, that means
$$B=(h\otimes \textrm{Id})A(h^{\vee}\otimes \textrm{Id}).$$
\nii We can write $A= \sum_{i}(C_i\otimes D_i)$, where $C_i\in \bigwedge^2 H_c^{\vee}$ and $D_i\in \bigwedge^2 V^{\vee}$ for all integers $i$. Thus
\begin{equation*}
\begin{array}{rcl}
B&=&(h\otimes \textrm{Id})(\sum_{i}(C_i\otimes D_i))(h^{\vee}\otimes \textrm{Id})\\
&=&\sum_i((h  C_i h^{\vee})\otimes D_i).
\end{array}
\end{equation*}
\nii Since $h  C_i h^{\vee}\in \bigwedge^2 H_c^{\vee}$ for all integers $i$, it follows that $B \in \bigwedge^2 H_c^{\vee}\otimes \bigwedge^2  V^{\vee}$.
\end{proof}

\nii The bijection given in the next theorem is the key ingredient to construct $\cM_{\PP^n}^{\cO}(c)$.

\begin{thm}\label{orbit}
There is a bijection between the set of isomorphism classes $\widetilde{\EE_c}$ and the orbit space $\cA_c/G$. The isotropy group in each point is $\{\pm \textrm{Id}_{H_c}\}$.
\end{thm}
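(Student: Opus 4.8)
The plan is to derive the statement from the bijection $\Theta\colon\EE_c\xrightarrow{\sim}\cA_c$ of Theorem \ref{thmequivalence} by analysing how $\Theta$ interacts with the two forgetful surjections $\EE_c\twoheadrightarrow\widetilde{\EE_c}$ (forget the framing $f$) and $\cA_c\twoheadrightarrow\cA_c/G$. The key preliminary step is a bookkeeping lemma saying that \emph{reframing implements the $G$-action}: given $[\cE,\phi,f]$ and a second framing $f'\colon H_c\to\textrm{H}^{n-1}(\cE(-n))$, write $f'=f\circ h$ with $h\in GL(H_c)=G$; substituting $f\circ h$ into the explicit formula \eqref{Adef} and using $(f\circ h)^{\vee}=h^{\vee}\circ f^{\vee}$ together with $(f\circ h)\otimes\tau_1=(f\otimes\tau_1)\circ(h\otimes\mathrm{Id})$ gives $\Theta([\cE,\phi,f\circ h])=(h^{\vee}\otimes\mathrm{Id})\,\Theta([\cE,\phi,f])\,(h\otimes\mathrm{Id})$, which is the $G$-translate of $\Theta([\cE,\phi,f])$ and stays in $\cA_c$ by Lemma \ref{ginv}. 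Since every framing of $\cE$ is of this form, the collection $\{\Theta([\cE,\phi,f'])\}$, as $f'$ varies, is exactly one $G$-orbit. Complementarily, if $g\colon\cE_1\xrightarrow{\sim}\cE_2$ satisfies $g^{\vee}\circ\phi_2\circ g=\phi_1$, then $(\cE_1,\phi_1,f_1)$ and $(\cE_2,\phi_2,g_{*}\circ f_1)$ are equivalent triples (Definition \ref{defeq}, with $\lambda=1$), so $\Theta$ sends them to the same point of $\cA_c$.

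Granting this, define $\Psi\colon\widetilde{\EE_c}\to\cA_c/G$ by $\Psi([\cE,\phi])=G\cdot\Theta([\cE,\phi,f])$ for any framing $f$; the first fact makes this independent of $f$, the second independent of the representative $(\cE,\phi)$, so $\Psi$ is well defined, and it is surjective because $\Theta$ is (Theorem \ref{thmequivalence}). For injectivity, if $\Psi([\cE_1,\phi_1])=\Psi([\cE_2,\phi_2])$, choose framings $f_1,f_2$ and use the bookkeeping lemma to replace $f_2$ so that $\Theta([\cE_1,\phi_1,f_1])=\Theta([\cE_2,\phi_2,f_2])$; injectivity of $\Theta$ gives $[\cE_1,\phi_1,f_1]=[\cE_2,\phi_2,f_2]$ in $\EE_c$, and the first commutative square of Definition \ref{defeq} then produces an isomorphism $g\colon\cE_1\xrightarrow{\sim}\cE_2$ with $g^{\vee}\phi_2 g=\phi_1$, i.e. $[\cE_1,\phi_1]=[\cE_2,\phi_2]$ in $\widetilde{\EE_c}$. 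Hence $\Psi$ is a bijection.

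For the isotropy at $A=\Theta([\cE,\phi,f])\in\cA_c$: by the bookkeeping lemma $h$ stabilises $A$ if and only if $(\cE,\phi,f)$ and $(\cE,\phi,f\circ h)$ are equivalent triples, i.e. (Definition \ref{defeq}) if and only if there is $\gamma\in\mathrm{Aut}(\cE)$ with $\gamma^{\vee}\circ\phi\circ\gamma=\phi$ and $\gamma_{*}\circ f=\lambda\,(f\circ h)$ for some $\lambda\in\{-1,1\}$. Here I would invoke simplicity of $\cE$, $\mathrm{Hom}(\cE,\cE)=\KK\cdot\mathrm{Id}_{\cE}$: then $\gamma=\mu\,\mathrm{Id}_{\cE}$ with $\mu\in\KK^{*}$, the relation $\gamma^{\vee}\circ\phi\circ\gamma=\mu^{2}\phi=\phi$ forces $\mu=\pm1$, and since $\gamma_{*}=\mu\,\mathrm{Id}$ on $\textrm{H}^{n-1}(\cE(-n))$ the second relation yields $h=(\mu/\lambda)\,\mathrm{Id}_{H_c}=\pm\mathrm{Id}_{H_c}$; conversely $\pm\mathrm{Id}_{H_c}$ visibly stabilises $A$ (take $\gamma=\pm\mathrm{Id}_{\cE}$). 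As $\{\pm\mathrm{Id}_{H_c}\}$ is fixed by the identification of $G$ built into $\alpha$, this shows the isotropy group at every point of $\cA_c/G$ is $\{\pm\mathrm{Id}_{H_c}\}$.

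The one substantial ingredient, and where I expect the real difficulty, is the simplicity of the bundles $\cE=\cE_A$ attached to $A\in\cA_c$. I would read it off the monad $\cM_A$ of \eqref{AMonad}: because $\mathrm{Hom}(\cO_{\PP^n},\cO_{\PP^n}(-1))=\mathrm{Hom}(\cO_{\PP^n}(1),\cO_{\PP^n})=\mathrm{Hom}(\cO_{\PP^n}(1),\cO_{\PP^n}(-1))=0$, every endomorphism of $\cE_A$ lifts to a triple of endomorphisms of the three terms of $\cM_A$; since $\mathrm{Hom}(\cO_{\PP^n},\cO_{\PP^n})=\KK$ and $W=H_c\otimes V$ when $r=(n-1)c$, such a triple must have the form $(h\otimes\mathrm{Id},\,h\otimes\mathrm{Id}_V,\,h'\otimes\mathrm{Id})$ subject to $(h'\otimes\mathrm{Id}_{V^{\vee}})\circ A=A\circ(h\otimes\mathrm{Id}_V)$. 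The crux, in the spirit of the analogous arguments in \cite{CTT2003} and \cite{CO12002}, is to show that for $A\in\cA_c$ — so $A$ non-degenerate, $q_A$ a symmetric isomorphism, and $\textrm{H}^0(\cE_A)=0$ — this last relation admits only scalar $h$, whence the induced endomorphism of $\cE_A$ is a scalar; ruling out non-scalar solutions using the absence of trivial summands of $\cE_A$ is the main obstacle. Once simplicity is in hand, the computation above completes the proof.
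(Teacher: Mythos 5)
Your handling of the bijection itself is sound, and it is organised differently from the paper's proof: where the paper constructs explicit isomorphisms of monads and, for injectivity, assembles the large commutative diagram supplied by Lemma \ref{ctt} to extract the element $h\in G$ conjugating $A$ into $B$, you obtain everything formally from Theorem \ref{thmequivalence} together with the computation that reframing $f\mapsto f\circ h$ transforms the formula \eqref{Adef} by the congruence action. That bookkeeping step is correct (up to the harmless substitution of $h$ by $h^{\vee}$ relative to the paper's definition of $\alpha$), and the resulting chain of well-definedness, surjectivity and injectivity for your $\Psi$ is complete; this part of your argument is, if anything, cleaner than the one in the text.

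The genuine gap is exactly where you located it, and it is fatal to your route for the isotropy claim. You reduce ``$h$ stabilises $A$'' to the existence of $\gamma\in\mathrm{Aut}(\cE_A)$ with $\gamma^{\vee}\circ\phi_A\circ\gamma=\phi_A$ and $\gamma_{*}\circ f_A=\lambda\, f_A\circ h$, and then need $\mathrm{End}(\cE_A)=\KK\cdot\mathrm{Id}$. But simplicity does \emph{not} hold for every $A\in\cA_c$: for the decomposable elements $A=B\otimes C$ that the paper itself uses in Example \ref{c6p3}, putting $B$ in the normal form \eqref{skewmatrix} splits the monad \eqref{AMonad} into $c/2$ mutually isomorphic direct summands, so $\cE_A$ is a direct sum and $\mathrm{End}(\cE_A)$ contains a copy of $\mathfrak{gl}_{c/2}(\KK)$; correspondingly the congruence stabiliser of $B\otimes C$ contains the entire group $\{h: hBh^{\vee}=B\}$, a symplectic group of positive dimension. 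So the lemma you would need is false, and no argument that factors through $\mathrm{Aut}(\cE_A)$ alone can yield $\{\pm\mathrm{Id}_{H_c}\}$ at every point of $\cA_c$. The paper's proof takes a different route, never invoking simplicity: it uses the compatibility of the framings with the diagram of Lemma \ref{ctt} to write $h=f_{\alpha(h,A)}^{-1}\circ g^{\ast}\circ f_A$ and then the condition $g^{\ast}\circ f_A=\pm f_{\alpha(h,A)}$ from Definition \ref{defeq} to force $h=\pm\mathrm{Id}_{H_c}$; be aware, though, that this hinges on the equivalence $g$ of triples being precisely the automorphism induced by $h$ on the monad, which is the same point your (correctly identified) obstruction puts under pressure. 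In short: your bijection argument stands, but the isotropy argument cannot be completed as proposed, because the simplicity you hoped to establish from the monad actually fails on a nonempty locus of $\cA_c$.
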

\begin{proof}
\nii Given $A \in \cA_c$ by Theorem \ref{thmequivalence} there exists $[\cE_A,\phi_A,f_A] \in \EE_c$ and we can define

$$ \begin{array}{rrcl}
\Psi:&\cA_c & \rightarrow& \widetilde{\EE_c}\\
&A& \mapsto & [\cE_A,\phi_A].
\end{array}
$$

\nii We will prove that $\Psi/G: A_c/G \rightarrow \widetilde{\EE_c}$ is a bijection. First note that $\Psi$ factors through $\cA_c/G$; indeed, consider $A,B \in \cA_c$ such that there exists $h \in G$ with $\alpha(h,A)=B$. We have the following commutative diagram

\begin{equation*}
\xymatrix{H_c\otimes V\ar[r]^-{A}\ar[d]_-{h\otimes Id}& H_c^{\vee}\otimes V^{\vee}\ar[d]^-{(h^{\vee})^{-1}\otimes Id}\\
H_c\otimes V\ar[r]_-{B}& H_c^{\vee}\otimes V^{\vee}.}
\end{equation*}

\nii Since $A,B \in \cA_c$, we have $A$ and $B$ invertible and by diagram \eqref{defqA}, we have the following commutative diagram,

\begin{equation*}
\xymatrix{H_c\otimes \cO_{\PP^n}(-1)\ar[r]^-{\textrm{Id}\otimes ev^{\vee}}\ar[d]_-{h\otimes \textrm{Id}}&W_A\ar[r]^-{A\otimes \textrm{Id}}\otimes \cO_{\PP^n} \ar[d]_-{h\otimes \textrm{Id}}& W_A^{\vee}\otimes \cO_{\PP^n}\ar[d]^-{(h^{\vee})^{-1}\otimes \textrm{Id}}\ar[r]^-{\textrm{Id}\otimes ev}&H_c^{\vee}\cO_{\PP^n}(1)\ar[d]^-{(h^{\vee})^{-1}\otimes \textrm{Id}}\\
H_c\otimes \cO_{\PP^n}(-1)\ar[r]_-{ev^{\vee}}&W_B\otimes \cO_{\PP^n}\ar[r]_-{B\otimes\textrm{Id}}& W_B^{\vee}\otimes \cO_{\PP^n}\ar[r]_-{ev}&H_c^{\vee}\cO_{\PP^n}(1),}
\end{equation*}

\nii so, we have the isomorphism of monads

\begin{equation*}
\xymatrix{\cM_A:&0\ar[r]& H_c\otimes\cO_{\PP^n}(-1) \ar[r]^-{a_A} \ar[d]_-{h \otimes \textrm{Id}}& W_A\otimes \cO_{\PP^n}\ar[r]^-{a_A^{\vee}\circ (A\otimes \textrm{Id})}\ar[d]^-{h\otimes \textrm{Id}}&H_c^{\vee}\otimes\cO_{\PP^n}(1)\ar[r]\ar[d]^-{(h^{\vee})^{-1}\otimes \textrm{Id}}&0\\
\cM_B:&0\ar[r]& H_c\otimes\cO_{\PP^n}(-1)\ar[r]_-{a_B}& W_B\otimes \cO_{\PP^n}\ar[r]_-{a_B^{\vee}\circ (B\otimes \textrm{Id})}&H_c^{\vee}\otimes\cO_{\PP^n}(1)\ar[r]&0.}
\end{equation*}

\nii Considering the cohomology of the monads $\cM_A$ and $\cM_B$, we get $\Psi(A)=[\cE_A,\phi_A]=[\cE_B, \phi_B]=\Psi(B)$ and we have the following commutative diagram

\begin{equation*}
\xymatrix{\cA_c \ar[rr]^-{\Psi}\ar[rd]_-{\pi}&&\widetilde{\EE_c}\\
&\cA_c/G\ar[ru]_-{\Psi/G}&}
\end{equation*}

\nii The projection $\pi $ is surjective by definition and we have by Theorem \ref{thmequivalence} and Proposition \ref{mainthm} that $\Psi$ is surjective as well. This implies that $\Psi/G$ is surjective.

\nii We need now to prove that $\Psi/G$ is injective. Indeed, let $A, B\in \cA_c$ such that $\Psi(A)=[\cE_A,\phi_A]=[\cE_B, \phi_B]=\Psi(B)$, we will show that there exists $h \in G$ such that $A=\alpha(h,B)$. If $[\cE_A,\phi_A]=[\cE_B, \phi_B]$, then by definition there exists an isomorphism $\xymatrix{g:\cE_A \ar[r]^-{\cong}& \cE_B }$ such that the following diagram is commutative

$$\xymatrix{\cE_A \ar[r]^-{\phi_A} \ar[d]_-{g} &\cE_A^{\vee}\\
\cE_B\ar[r]_-{\phi_B} & \cE_B^{\vee}\ar[u]_-{g^{\vee}}.
}$$

\nii Hence by Lemma \ref{ctt} we have the commutative diagram, which works in a more general case,

\begin{equation*}
{\tiny \xymatrix{H_c\otimes V\ar[r]\ar[d]^-{\cong}\ar@{.>}@/^0.8cm/[rrr]|{A}& W_A\ar[r]^-{q_A}\ar[d]^{\cong}& W_A^{\vee}\ar[r]\ar[d]^-{\cong}& H_c^{\vee}\otimes V^{\vee}\ar[r]\ar[d]^-{\cong}& \textrm{Ker } A^{\vee}\ar[r]\ar[d]^-{\cong}&0\\
H^{n-1}(\cE_A\otimes \Omega^{n}(1))\otimes V \ar[r]\ar[d]^-{g^{\ast}}& H^{n-1}(\cE_A\otimes \Omega^{n-1})\ar[d]^-{g^{\ast}}& H^1(\cE_A\otimes \Omega^{1})\ar[l]^-{\cong}\ar[r]\ar[d]^-{g^{\ast}}& H^1(\cE_A(-1))\otimes V^{\vee}\ar[r]\ar[d]^-{g^{\ast}}& H^1(\cE_A)\ar[r]\ar[d]^-{g^{\ast}}&0\\
H^{n-1}(\cE_B\otimes \Omega^{n}(1))\otimes V \ar[r]& H^{n-1}(\cE_B\otimes \Omega^{n-1})& H^1(\cE_B\otimes \Omega^{1})\ar[l]^-{\cong}\ar[r]& H^1(\cE_B(-1))\otimes V^{\vee}\ar[r]& H^1(\cE_B)\ar[r]&0\\
H_c\otimes V\ar[r]\ar[u]^-{\cong}\ar@{.>}@/_0.8cm/[rrr]|{B}& W_B\ar[r]_-{q_B}\ar[u]^{\cong}& W_B^{\vee}\ar[r]\ar[u]^-{\cong}& H_c^{\vee}\otimes V^{\vee}\ar[r]\ar[u]^-{\cong}& \textrm{Ker } B^{\vee}\ar[r]\ar[u]^-{\cong}&0,
} }
\end{equation*}

\nii where $g^{\ast}$ denotes the morphisms induced by $g$ on the cohomology groups, but recall that in our case $H_c \otimes V\cong W_A$ and $\textrm{Ker } A \cong 0$. Thus, the middle blocks are commutative and the commutativity of the top and bottom blocks follows from Lemma \ref{ctt}. Therefore, there exists $h\in G$ such that $B=\alpha(h,A)$.

\nii Finally, we will prove that the isotropy group is $\{ \pm \textrm{Id}_{H_c}\}$. Let $h \in G$ and $A \in \cA_c$, such that $ A = \alpha(h,A)$. By Theorem \ref{thmequivalence} we have $[\cE_A,\phi_A,f_A]=[\cE_{\alpha(h,A)},\phi_{\alpha(h,A)},f_{\alpha(h,A)}]$, since they come from the same symmetric map; hence there exists an isomorphism $\xymatrix{g:\cE_A\ar[r]^-{\cong}& \cE_{\alpha(h,A)}}$ such that the following diagrams commute

\begin{equation*}
\xymatrix{ \cE_A \ar[r]^-{\phi_A} \ar[d]_-{g} & \cE_A^{\vee}&  &H_c \ar[r]^-{f_A} \ar[d]_-{\lambda \textrm{Id}_c}& \textrm{H}^{n-1}(\cE_A(-n))\ar[d]^-{g_{\ast}}\\
\cE_{\alpha(h,A)} \ar[r]_-{\phi_{\alpha(h,A)}} & \cE_{\alpha(h,A)}^{\vee} \ar[u]_-{g^{\vee}}&  &H_c \ar[r]_-{f_{\alpha(h,A)}}& \textrm{H}^{n-1}(\cE_{\alpha(h,A)}(-n)),
}
\end{equation*}

\nii with $\lambda \in \{-1,1\}$. Thus
\begin{equation}\label{fa}
g^{\ast}\circ f_A= \pm f_{\alpha(h,A)}.
\end{equation}

\nii On the other hand, since $A=\alpha(h,A)$ by Lemma \ref{ctt} we have the commutative diagram
\begin{equation*}
{\tiny \xymatrix{H_c\otimes V\ar[r]\ar[d]^-{f_A}\ar@{.>}@/^0.8cm/[rrr]|{A}\ar@{.>}@/_0.8cm/[ddd]|{h}& W_A\ar[r]^-{q_A}\ar[d]& W_A^{\vee}\ar[r]\ar[d]& H_c^{\vee}\otimes V^{\vee}\ar[d]_-{(f_A^{\vee})^{-1}}\ar@{.>}@/^0.8cm/[ddd]|{(h^{\vee})^{-1}}\\
H^{n-1}(\cE_A\otimes \Omega^{n}(1))\otimes V \ar[r]\ar[d]^-{g^{\ast}}& H^{n-1}(\cE_A\otimes \Omega^{n-1})\ar[d]^-{g^{\ast}}& H^1(\cE_A\otimes \Omega^{1})\ar[l]^{\cong}\ar[r]\ar[d]^-{g^{\ast}}& H^1(\cE_A(-1))\otimes V^{\vee}\ar[d]_-{g^{\ast}}\\
H^{n-1}(\cE_{\alpha(h,A)}\otimes \Omega^{n}(1))\otimes V \ar[r]& H^{n-1}(\cE_{\alpha(h,A)}\otimes \Omega^{n-1})& H^1(\cE_{\alpha(h,A)}\otimes \Omega^{1})\ar[l]^-{\cong}\ar[r]& H^1(\cE_{\alpha(h,A)}(-1))\otimes V^{\vee}\\
H_c\otimes V\ar[r]\ar[u]_-{f_{\alpha(h,A)}}\ar@{.>}@/_0.8cm/[rrr]|{{\alpha(h,A)}}& W_B\ar[r]_-{q_{\alpha(h,A)}}\ar[u]& W_{\alpha(h,A)}^{\vee}\ar[r]\ar[u]& H_c^{\vee}\otimes V^{\vee}\ar[u]^-{(f_{\alpha(h,A)}^{\vee})^{-1}},
} }
\end{equation*}

\nii and therefore, looking at the left column we have
\begin{equation*}
\begin{array}{rcl}
h&=&(f_{\alpha(h,A)})^{-1}\circ g^{\ast}\circ f_A\\
&=&(f_{\alpha(h,A)})^{-1}\circ (\pm f_{\alpha(h,A)}) \textrm{   \hspace{1cm} (By \eqref{fa})}\\
&=&\pm \textrm{Id}_{H_c},
\end{array}
\end{equation*}

\nii and the isotropy group is $\{\pm \textrm{Id}_{H_c}\}.$

\end{proof}

\nii Since $G=GL(H_c)$ is a reductive group and its isotropy group $\{\pm\textrm{Id}_{H_c}\}$ is a discrete subgroup, the quotient $G_0=G/\{\pm\textrm{Id}_{H_c}\}$ is also reductive. Moreover, the action of $G_0$ on $\cA_c$ is free and we have:

\begin{thm}\label{finemoduli} The geometric quotient $\cM^{\cO}_{\PP^n}(c,(n-1)c):=\cA_c//G_0$ is reduced and irreducible affine coarse moduli space of dimension ${{c}\choose{2}}{{n+1}\choose{2}} -c^2$ for orthogonal instanton bundles with charge $c$, rank $(n-1)c$ and no global sections on $\PP^n$, for $n,c\geq 3$.
\end{thm}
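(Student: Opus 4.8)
The plan is to realize $\cM^{\cO}_{\PP^n}(c,(n-1)c)$ as a GIT quotient of an explicit smooth affine variety by a free action of a reductive group, and then read off affineness, reducedness, irreducibility and the dimension before verifying the universal (coarse moduli) property. Put $\mathbb{A}:=\bigwedge^2 H_c^{\vee}\otimes\bigwedge^2 V^{\vee}$, an affine space of dimension $\binom{c}{2}\binom{n+1}{2}$, and regard $A\in\mathbb{A}$ as a symmetric $(n+1)c\times(n+1)c$ matrix as in the proof of Proposition \ref{mainthm}. Since $r=(n-1)c$ forces $2c+r=(n+1)c$, conditions (A1) and (A3) are superfluous (as noted right before the statement), and non-degeneracy of $A$ amounts to the single equation $\det A\neq 0$; hence $\cA_c=\{A\in\mathbb{A}:\det A\neq 0\}$ is a principal open subset of $\mathbb{A}$, in particular a smooth \emph{affine} variety, irreducible and reduced, of dimension $\binom{c}{2}\binom{n+1}{2}$ (one first checks $\cA_c\neq\emptyset$ for all $c,n\geq 3$, extending the full-rank constructions of Examples \ref{c6p3} and \ref{c5p3}). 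The group $G=GL(H_c)$ is reductive, preserves $\cA_c$ by Lemma \ref{ginv}, and the scalars $\pm\textrm{Id}_{H_c}$ act trivially, so $G$ acts through the reductive quotient $G_0=G/\{\pm\textrm{Id}_{H_c}\}$; by Theorem \ref{orbit} this $G_0$-action is free, and the $G$- and $G_0$-orbits coincide, so $\cA_c/G=\cA_c/G_0$.

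\textbf{The quotient.} As $G_0$ is reductive and $\cA_c$ is affine, $\cO(\cA_c)^{G_0}$ is a finitely generated $\KK$-algebra and $\pi\colon\cA_c\to\cA_c//G_0:=\Spec\cO(\cA_c)^{G_0}$ is the affine categorical quotient. Freeness forces all orbits to be closed: a non-closed orbit $O$ would contain in $\overline{O}\setminus O$ a closed orbit of dimension $<\dim O=\dim G_0$, contradicting triviality of the stabilizers. Hence the fibres of $\pi$ are exactly the orbits, $\pi$ is a geometric quotient (a $G_0$-torsor in the \'etale topology), and $\cM^{\cO}_{\PP^n}(c,(n-1)c):=\cA_c//G_0$ is affine; it is moreover integral, because $\cO(\cA_c)^{G_0}$ is a subring of the domain $\cO(\cA_c)$, hence reduced and irreducible, and its dimension is $\dim\cA_c-\dim G_0=\binom{c}{2}\binom{n+1}{2}-c^2$, all fibres of $\pi$ having dimension $\dim G_0$.

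\textbf{The coarse moduli property.} By Theorem \ref{orbit} the closed points of $\cA_c/G_0$ correspond bijectively to $\widetilde{\EE_c}$, i.e.\ to isomorphism classes of orthogonal instanton bundles on $\PP^n$ with charge $c$, rank $(n-1)c$ and no global sections. To promote this to the coarse moduli statement I would globalize Lemma \ref{help} and Theorem \ref{thmequivalence} over a base $S$: given a flat family $\cE_S$ of such bundles, the cohomology table for $\cE$ computed in the Preliminaries together with the proof of Lemma \ref{help} shows that the relative cohomology sheaves occurring in \eqref{Adef} — the relative versions of $\textrm{H}^{n-1}(\cE(-n))$, $\textrm{H}^1(\cE(-1))$, $\textrm{H}^1(\cE\otimes\Omega^1_{\PP^n})$ and $\textrm{H}^{n-1}(\cE\otimes\Omega^{n-1}_{\PP^n})$ — are locally free of constant rank and commute with base change; consequently formula \eqref{Adef} produces a section $A_S$ of $\mathbb{A}\otimes\cO_S$ taking values in $\cA_c$, well defined up to the $G_0$-action, the ambiguity being precisely the choice of framing $f_S$. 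This yields a morphism $S\to\cA_c//G_0$ functorial in $S$ and, together with the universal property of the categorical quotient applied to the relative version of the monad \eqref{AMonad}, shows that $\cA_c//G_0$ corepresents the moduli functor; being a geometric quotient with the expected closed points, it is a coarse moduli space.

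\textbf{Where the difficulty lies.} The parts concerning $\cA_c$, $G_0$ and the GIT quotient are essentially formal. The genuine obstacle is the last step: carrying the correspondence of Theorem \ref{orbit} through an arbitrary base, which needs the base-change statements for the higher direct images above and a careful verification that the $G_0$-action accounts for \emph{all} choices in the construction of $A_S$, so that $S\mapsto[A_S]$ is well defined and functorial. A secondary but essential point is the identification $\cA_c=\{\det A\neq 0\}$, i.e.\ that non-degeneracy of $A\in\bigwedge^2 H_c^{\vee}\otimes\bigwedge^2 V^{\vee}$ already forces the maximal rank $(n+1)c$; this is exactly what makes $\cA_c$ affine rather than merely quasi-affine, and hence what makes the resulting moduli space affine.
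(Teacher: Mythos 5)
Your proposal follows essentially the same route as the paper: realize the moduli space as the affine GIT quotient $\cA_c//G_0$ of the affine, reduced and irreducible variety $\cA_c$ by the free action of the reductive group $G_0$, deduce that all orbits are closed so the categorical quotient is geometric, and compute the dimension as $\dim\cA_c-\dim G_0={{c}\choose{2}}{{n+1}\choose{2}}-c^2$. You are in fact more careful than the paper on the two points it glosses over --- that $\cA_c$ is the \emph{principal} open subset $\{\det A\neq 0\}$ (an arbitrary open subset of an affine variety need not be affine, so this identification is what actually yields affineness), and that the coarse moduli property requires carrying the correspondence of Theorem \ref{thmequivalence} through families over a base --- so the argument is sound and, if anything, more complete.
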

\begin{proof}
First note that $\cA_c$ is an open dense subset of $\bigwedge^{2}H_c^{\vee} \otimes \bigwedge^2V^{\vee}$ which is affine, reduced and irreducible, thus $\cA_c$ is also affine, reduced and irreducible. Moreover being $G_0$ a reductive group, then $\cM^{\cO}_{\PP^n}(c,(n-1)c):=\cA_c//G_0$ is an affine good quotient and therefore $\cM^{\cO}_{\PP^n}(c,(n-1)c)$ is an affine, reduced and irreducible categorical quotient. Since the action is free all orbits of the action are closed and it follows that $\cM^{\cO}_{\PP^n}(c,(n-1)c)$ is an affine, reduced and irreducible coarse moduli space.

\nii The dimension of $\cM^{\cO}_{\PP^n}(c,(n-1)c)$ can be computed as
\begin{equation*}
\displaystyle \begin{array}{rcl}
\dim \cA_c - \dim G_0& =& \dim (\bigwedge^2 H_c^{\vee}\otimes \bigwedge^2  V^{\vee})- \dim G\\
&=& {{c}\choose{2}}{{n+1}\choose{2}} -c^2.
\end{array}
\end{equation*}

\end{proof}

\begin{rem}
Since $\cM^{\cO}_{\PP^n}(c,(n-1)c)$ is reduced and irreducible it follows that $\cM^{\cO}_{\PP^n}(c,(n-1)c)$ is generically smooth. But a question arises naturally in this context is:
	
\end{rem}

\begin{quest}
Is $\cM^{\cO}_{\PP^n}(c)$ smooth?
\end{quest}

\section{Splitting type}

\nii The goal of this section is to determine the type of splitting of orthogonal instanton bundles in $\cM_{\PP^n}^{\cO}(c,r)$ for $n,c\geq 3$, whenever non-empty.\\ 

\nii Jardim, Marchesi and Wi\ss{}dorf proved in (\cite{JMW2016} - Lemma 4.3 and Theorem 4.4) that there are no orthogonal instanton bundles of trivial splitting type, arbitrary rank $r$, and charge $2$ or odd on $\mathbb{P}^n$.  In order to determine the splitting type of the orthogonal instanton bundles, with no global section, charge $c$ and rank $r$ on $\PP^n$ we will associate these bundles to Kronecker modules.

\begin{defi}\label{kronecker}
A {\bf Kronecker module} of rank $r$ is a linear map
\begin{equation*}
\gamma: \bigwedge^2 V \rightarrow \textrm{Hom}(H_c, H_c^{\vee}),
\end{equation*}
\nii such that for the associated linear map,
\begin{equation*}
\hat{\gamma}: V \otimes H_c \rightarrow V^{\vee} \otimes H_c^{\vee},
\end{equation*}
\nii defined by $\hat{\gamma}(v_1 \otimes h_1)(v_2\otimes h_2)=[\gamma(v_1\wedge v_2)(h_1)](h_2)$, the following statements hold

\begin{itemize}
\item[(K1)] $\hat{\gamma}(v\otimes -): H_c \rightarrow V^{\vee} \otimes H_c^{\vee}$ is injective for all $v \neq 0$.
\item[(K2)]If $v^{\vee \vee}: V^{\vee}\otimes H_c^{\vee}\rightarrow H_c^{\vee}$ is the evaluation map associated to $v\in V$, then $v^{\vee \vee}\circ\hat{\gamma}: V\otimes H_c\rightarrow H_c^{\vee}$ is surjective for all $v \neq 0$.
\item[(K3)] $\rk \hat{\gamma}=2n+r$.
\end{itemize}

\end{defi}

\nii In section 3, we saw that given $[\cE, \phi , f] \in \mathbb{E}[c,r]$, with $n,c\geq 3$, by Theorem \ref{thmequivalence} there exists $A \in \cA[c,r]$ and the monad below

\begin{equation}\label{monadE}
\xymatrix{\cM_A: 0\ar[r]&H_c \otimes \cO_{\PP^n}(-1)\ar[r]^-{a_A}&  W \otimes \cO_{\PP^n}  \ar[rr]^-{a_A^{\vee}\circ (A\otimes \textrm{Id})} & &H^{\vee}_c \otimes \cO_{\PP^n}(1)\ar[r]&0},
\end{equation}

\nii whose cohomological bundle $\cE_A$ is isomorphic to $\cE$.

\nii Now let us use the maps $a_A$ and $b_A=a_A^{\vee}\circ (A\otimes \textrm{Id})$ in \eqref{monadE} to construct a Kronecker module associated to $\cE$. We can associate to $a_A$ and $b_A$ the linear maps $\alpha \in \textrm{Hom}(V,\textrm{Hom}(H_c,W))$ and $\beta \in \textrm{Hom}(V,\textrm{Hom}(W, H_c^{\vee}))$ as follows

\begin{equation*}
\begin{array}{rrll}
\alpha: V\rightarrow & \textrm{Hom}(H_c,W)&&\\
v\mapsto & \alpha(v):H_c & \rightarrow & W\\
& h & \mapsto & a_A(x)(h\otimes v)
\end{array}
\end{equation*}

\nii and

\begin{equation*}
\begin{array}{rrll}
\beta: V\rightarrow & \textrm{Hom}(W,H_c^{\vee})&&\\
v\mapsto & \beta(v):W & \rightarrow & H_c^{\vee}\\
& w & \mapsto & b_A(x)(w)(h\otimes v)
\end{array}
\end{equation*}

\nii where $x=\PP(\KK v)$. First note that $b_A(x)(w)(h\otimes v)= A(w)(\alpha(v)(h))$, this is why $\beta$ it is also known as the {\bf transpose map of $\alpha$} (with respect to $A$).

\nii This pair of maps $(\alpha, \beta)$ has the following properties:

\begin{itemize}
\item[(P1)] $\alpha(v):H_c\rightarrow W$ is injective for all $v\neq 0$;
\item[(P2)] $\beta(v) \circ \alpha(v):H_c \rightarrow H_c^{\vee}$ is the zero mapping for all $v\in V$;
\item[(P3)] the map $\hat{\alpha}:V\otimes H_c\rightarrow W$ is surjective, with $\hat{\alpha}(v\otimes h)=\alpha(v)(h)$.
\end{itemize}

\nii The property (P1) holds if and only if $a_A$ is injective in each fiber.

\nii The property (P2) is equivalent for the composition $a_A^{\vee}\circ(A \otimes \textrm{Id})\circ a_A$ to be the zero mapping in each fiber. Indeed, for each $x \in \PP(\KK v)\in \PP^n$, we have
\begin{equation*}
\begin{array}{rcl}
(b_A \circ a_A)(x)(h\otimes v)& = & [a_A^{\vee}\circ (A\otimes \textrm{Id})\circ a_A](x)(h\otimes v)\\
&=&A(\alpha(v)(h))(\alpha(v)(h))\\
&=&\beta(v) \circ \alpha(v).
\end{array}
\end{equation*}

\nii Now, let us prove that (P3) holds if and only if the cohomology bundle of \eqref{monadE} has no global sections. Indeed, by the display of the monad \eqref{monadE}, we have the following exact sequences

$$\xymatrix{0\ar[r]&H_c \otimes \cO_{\PP^n}(-1)\ar[r]^-{a_A}&  \textrm{Ker }(b_A)\ar[r] & \cE_A \ar[r]&0\\
0\ar[r]&\textrm{Ker }(b_A)\ar[r]&  W \otimes \cO_{\PP^n}  \ar[r]^-{b_A} &H^{\vee}_c \otimes \cO_{\PP^n}(1)\ar[r]&0,}
$$

\nii thus,
$$H^0(\cE_A)\cong H^0(\textrm{Ker }(b_A))\cong \textrm{Ker } (W\rightarrow H_c^{\vee}\otimes V^{\vee}),$$
\nii that means $H^0(\cE_A)=0$ if and only if $ H^0(b_A) :W\rightarrow H_c^{\vee}\otimes V^{\vee}$ is injective, if and only if $\hat{\alpha}:V\otimes H_c\rightarrow W$ is surjective.

\nii Let us consider

\begin{equation}\label{KronE}
\begin{array}{rcl}
\gamma^{\prime}: V\times V& \rightarrow& \textrm{Hom}(H_c, H_c^{\vee})\\
(v_1,v_2)& \mapsto& \beta(v_2)\circ \alpha(v_1),
\end{array}
\end{equation}

\nii which defines an element $\gamma \in \textrm{Hom}(\bigwedge^{2}V,\textrm{Hom}(H_c,H_c^{\vee}))$. We now will prove that the map $\gamma$ is a Kronecker module of rank $r$.

\begin{lem}
The element $\gamma \in \textrm{Hom}(\bigwedge^{2}V,\textrm{Hom}(H_c,H_c^{\vee}))$ constructed as above is a Kronecker module of rank $r$.
\end{lem}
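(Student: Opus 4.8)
The plan is to verify the three conditions (K1)--(K3) from Definition \ref{kronecker} directly, by identifying the associated map $\widehat{\gamma}: V\otimes H_c \to V^\vee\otimes H_c^\vee$ with the map $A$ (up to the identifications already set up) and then translating the properties (P1)--(P3) of the pair $(\alpha,\beta)$ into the required statements. First I would unwind the definitions: from \eqref{KronE} and the formula $\beta(v_2)\circ\alpha(v_1)(h_1)(h_2) = A(\alpha(v_1)(h_1))(\alpha(v_2)(h_2))$ one sees, using that $a_A^\vee$ is essentially the evaluation map in each fiber (cf.\ the explicit description of $a_A^\vee$ after \eqref{monadexe}), that $\widehat{\gamma}(v_1\otimes h_1)(v_2\otimes h_2)$ is nothing but $A(h_1\otimes v_1)(h_2\otimes v_2)$ — so $\widehat{\gamma}$ coincides with $A: H_c\otimes V\to H_c^\vee\otimes V^\vee$ (after the obvious flip of tensor factors). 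Granting this identification, (K3) is immediate: $\rk\widehat{\gamma} = \rk A = 2c+r$ by property (A1); one just needs to note that in Definition \ref{kronecker} the stated rank "$2n+r$" should read "$2c+r$", matching (A1), or else to observe that the intended normalization is consistent with what was proved in Lemma \ref{help}.

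Next I would handle (K1). The map $\widehat{\gamma}(v\otimes-): H_c\to V^\vee\otimes H_c^\vee$ sends $h\mapsto A(h\otimes v)$, and its kernel being trivial for every $v\neq 0$ is exactly the statement that $A$ is non-degenerate in the sense recalled before (A2): $A(h\otimes v)\neq 0$ for every nonzero decomposable tensor. Since $A\in\cA[c,r]$ satisfies (A2), condition (K1) follows. Equivalently, this is property (P1): $\alpha(v)$ is injective for all $v\neq0$, which holds because $a_A$ is injective on each fiber.

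For (K2) I would argue as follows. Fix $v\neq 0$ and let $x=\PP(\KK v)$. The composition $v^{\vee\vee}\circ\widehat{\gamma}: V\otimes H_c\to H_c^\vee$ is, fiberwise at $x$, precisely the map $b_A(x) = a_A^\vee\circ(A\otimes\textrm{Id})$ evaluated in the fiber over $x$, which is surjective because $A$ is non-degenerate — by the equivalence of (i), (ii), (iii) recalled after \eqref{AMonad}, non-degeneracy of $A$ is the same as surjectivity of $a_A^\vee\circ(q_A\otimes\textrm{Id})$, hence fiberwise surjectivity of $b_A$. Alternatively one invokes (P3) together with the fact that $\cE_A$ has no global sections: surjectivity of $\widehat{\alpha}:V\otimes H_c\to W$ plus the display of the monad forces $b_A$ to remain surjective after composing with any nonzero evaluation. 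Assembling the three verifications completes the proof that $\gamma$ is a Kronecker module of rank $r$.

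The main obstacle I anticipate is purely bookkeeping rather than conceptual: carefully matching the two bilinear forms $\widehat{\gamma}$ and $A$ through the chain of canonical identifications ($W\cong H^1(\cE_A\otimes\Omega^1)$, the Serre-duality flips, and the $\tau_1,\tau_2$ isomorphisms of \eqref{tau12}), and making sure the sign/normalization in (K3) is the one forced by (A1). Once that dictionary is nailed down, (K1)--(K3) are direct restatements of (A2), (A2) again (in its surjective-evaluation guise), and (A1) respectively, together with the no-global-sections hypothesis encoded in (P3).
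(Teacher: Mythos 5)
Your proof is correct and follows essentially the same route as the paper: identify $\hat{\gamma}$ with $\hat{\beta}\circ\hat{\alpha}$ (equivalently with $A$, which the paper records in the remark immediately after the lemma), deduce (K1) from non-degeneracy/(P1), and get (K3) from the rank of $A$ — including your correct observation that the ``$2n+r$'' in Definition \ref{kronecker} should be read as $2c+r$ to match (A1). The only cosmetic difference is in (K2): the paper derives it from (K1) via $v^{\vee\vee}\circ\hat{\gamma}=\hat{\gamma}(v\otimes -)^{\vee}$, while you argue through fiberwise surjectivity of $b_A$ together with (P3); both are valid.
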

\begin{proof}
Let $\hat{\alpha}:V\otimes H_c\rightarrow W$, $\hat{\beta}:W\rightarrow V^{\vee}\otimes H_c^{\vee}$ and $\hat{\gamma}:H_c \otimes V\rightarrow H_c^{\vee}\otimes V^{\vee}$ be the linear maps associated to $\alpha$, $\beta$ and $\gamma$, respectively. By the definition of $\gamma^{\prime}$ in \eqref{KronE} we have $\hat{\gamma}=\hat{\beta}\circ \hat{\alpha}$. Now let us prove that $\gamma$ satisfies the properties (K1)-(K3) of Definition \ref{kronecker}.

\nii For each $v\neq 0$ we have $$\hat{\gamma}(v\otimes h)[\gamma(v\wedge v_1)(h)](h_1),$$
\nii but

\begin{equation*}
\begin{array}{rcl}
[\gamma^{\prime}(v\wedge v_1)(h)](h_1)&=&[\beta(v_1)\circ\alpha(v)](h)(h_1)\\
&=&A(\alpha(v)(h))(\alpha(v_1)(h_1)).
\end{array}
\end{equation*}

\nii Thus (K1) follows by property (P1). Also observe that $v^{\vee \vee}\circ \hat{\gamma}=\hat{\gamma}(v\otimes -)^{\vee}$, therefore (K1) implies (K2).

\nii By (P3) we have that $\hat{\alpha}$ is surjective and $\hat{\beta}$ is injective, thus it follows that $\rk \hat{\gamma}= (n+1)c$, moreover, since $\hat{\gamma}=\hat{\beta}\circ \hat{\alpha}$, we have rank $\gamma=r$ and the property (P3) and therefore $\gamma$ is a Kronecker module of rank $r$.

\end{proof}

\begin{rem} Note that the linear map $\hat{\gamma}$ associated to the Kronecker module $\gamma$ is in fact the map $A$. Fixing the basis $\{v_1, \cdots , v_{n+1}\}$ and $\{h_1, \cdots , h_c\}$ for $V$ and $H_c$, repectively, for any $v_j \otimes h_i \in V\otimes H_c$, we have $\hat{\gamma}(v_j\otimes h_i)= A(v_j\otimes h_i)$. Indeed, for any $v_k \otimes h_l \in V\otimes H_c$ it follows

\begin{equation*}
\begin{array}{rcl}
\hat{\gamma}(v_j\otimes h_i)(v_k\otimes h_l)&=& (\hat{\beta}\circ \hat{\alpha})(v_j\otimes h_i)(v_k\otimes h_l)\\
&=&\hat{\beta}(\hat{\alpha} (v_j\otimes h_i)) (v_k\otimes h_l)\\
&=& \hat{\beta}(\alpha (v_j)(h_i)) (v_k\otimes h_l)\\
&=& \hat{\beta}(a_A (x_j)(h_i\otimes v_j)) (v_k\otimes h_l)\\
&=& \hat{\beta}(h_i\otimes v_j) (v_k\otimes h_l)\\
&=& \beta(v_k)(h_i\otimes v_j)) (h_l)\\
&=& b_A(x_k)(h_i\otimes v_j) (h_l\otimes v_k)\\
&=& A(h_i\otimes v_j) (\alpha (v_k) (h_l))\\
&=& A(h_i\otimes v_j) (h_l\otimes v_k),
\end{array}
\end{equation*}

\noindent where $x_j=\PP(\KK v_j)$ and $x_k=\PP(\KK v_k)$.

\end{rem}

 The following result describes how we can obtain the splitting type of an orthogonal instanton  bundle. 

\begin{thm}\label{splitting}
Let $\cE$ be an orthogonal instanton bundle on $\PP^n$ with charge $c$, rank $r$ and no global sections, for $n,c\geq 3$, and let $\gamma$ be its associated Kronecker module. If $L \subset\PP^n$ is the line defined by $v_1,v_2\in V$, $v_1\wedge v_2\neq 0$, the restriction $\cE|_L$ is trivial if and only if $\gamma(v_1\wedge v_2)$ is an isomorphism.

\end{thm}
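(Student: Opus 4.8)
The plan is to restrict the whole monad $\cM_A$ from \eqref{monadE} to the line $L \cong \PP^1$ and to use the standard fact that a vector bundle on $\PP^1$ is trivial if and only if it has the right rank and its space of global sections has the expected dimension (equivalently, it has no summand $\cO_{\PP^1}(k)$ with $k \neq 0$). First I would set up coordinates: choose the basis $\{v_1, v_2, \ldots, v_{n+1}\}$ of $V$ so that $L$ is cut out by $v_3, \ldots, v_{n+1}$, and write $\cE|_L$ as the cohomology of the restricted monad
\begin{equation*}
\xymatrix{0\ar[r] & H_c \otimes \cO_L(-1) \ar[r]^-{a_A|_L} & W \otimes \cO_L \ar[r]^-{b_A|_L} & H_c^{\vee} \otimes \cO_L(1) \ar[r] & 0.}
\end{equation*}
On $\PP^1$ both $\cO_L(-1)$ has no global sections and $\cO_L(1)$ has no higher cohomology, so the display of the restricted monad gives $H^0(\cE|_L) \cong \ker\bigl(H^0(b_A|_L)\bigr)$ and, since $\cE|_L$ has rank $r$ with $c_1 = 0$, the bundle $\cE|_L$ is trivial precisely when $h^0(\cE|_L) = r$, i.e. when $H^0(b_A|_L)\colon W \to H_c^{\vee}\otimes H^0(\cO_L(1))$ — a map between spaces of dimensions $2n+r$ and $2c$ — has image of the maximal possible dimension $2c$, equivalently kernel of dimension exactly $r$. (One should record that $\cE|_L$ can never have a negative summand because $H^0(\cE|_L(-1)) = 0$ follows from the instanton conditions, so triviality is equivalent to $h^0(\cE|_L) = r$; this is the only place the "no negative twists" input is needed.)

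Next I would identify the rank of $H^0(b_A|_L)$ with the rank of $\gamma(v_1\wedge v_2)$. Using the Remark that $\hat\gamma = A$ and the factorization $\hat\gamma = \hat\beta \circ \hat\alpha$, restriction to $L$ replaces $V$ by the two-dimensional subspace $\langle v_1, v_2\rangle$; concretely $H^0(\cO_L(1)) \cong \langle v_1, v_2\rangle^{\vee}$ and $H^0(b_A|_L)$ is, up to the identification $W^{\vee} \cong W$ provided by $q_A$, exactly the block of $A$ obtained by pairing with decomposable tensors supported on $v_1, v_2$. Since $A \in \bigwedge^2 H_c^{\vee} \otimes \bigwedge^2 V^{\vee}$, the only component of $A$ that survives this restriction is the one landing in the one-dimensional space $\bigwedge^2\langle v_1,v_2\rangle^{\vee}$, and that component is precisely $\gamma(v_1\wedge v_2) \in \mathrm{Hom}(H_c, H_c^{\vee})$. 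Thus a direct diagram chase through the restriction of \eqref{KronE} shows
\begin{equation*}
\rk H^0(b_A|_L) = 2c - \dim\ker\gamma(v_1\wedge v_2) + (\text{contribution of }a_A|_L),
\end{equation*}
and using that $a_A|_L$ is still injective in every fiber (property (P1), which is stable under restriction) one cleans this up to the statement that $\cE|_L$ is trivial iff $\gamma(v_1\wedge v_2)\colon H_c \to H_c^{\vee}$ has full rank $c$, i.e. is an isomorphism.

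The two directions can then be assembled. If $\gamma(v_1\wedge v_2)$ is an isomorphism, the kernel computation gives $h^0(\cE|_L) = r$, hence $\cE|_L$ is trivial. Conversely if $\cE|_L \cong \cO_L^{\,r}$, then the monad restricted to $L$ is, after a change of trivialization, forced into the "standard" shape in which $b_A|_L$ has maximal rank, and unwinding the identifications above forces $\gamma(v_1\wedge v_2)$ to be invertible. The step I expect to be the main obstacle is the bookkeeping in the middle paragraph: carefully tracking how the symmetric isomorphism $q_A$, the evaluation maps, and the restriction $V \twoheadrightarrow \langle v_1, v_2\rangle^{\vee}$ interact so that the rank of $H^0(b_A|_L)$ is cleanly expressed through $\gamma(v_1 \wedge v_2)$ alone — in particular, checking that the $a_A|_L$-part of $W$ contributes exactly $2n$ to the cokernel on $L$ (so that it is the $r$-dimensional complement that detects triviality), which is where one uses that $a_A|_L$ is fiberwise injective and that $\cO_L(-1)$ is acyclic in degree $0$.
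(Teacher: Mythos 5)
Your reduction of triviality to the single numerical condition $h^0(\cE|_L)=r$ is where the argument breaks. Since $\cE|_L\cong\bigoplus_i\cO_L(a_i)$ with $\sum_i a_i=0$, the splitting type $(1,-1,0,\dots,0)$ also gives $h^0(\cE|_L)=r$, so equality of $h^0$ with $r$ is necessary but not sufficient for triviality; what one must rule out is a \emph{positive} summand, and that is not seen by $h^0(\cE|_L)$ alone. Your patch for this --- ``$\textrm{H}^0(\cE|_L(-1))=0$ follows from the instanton conditions, so there are no negative summands'' --- is doubly flawed: first, $\textrm{H}^0(\cE(-1))=0$ on $\PP^n$ does not imply $\textrm{H}^0(\cE|_L(-1))=0$ for every line $L$ (that vanishing fails exactly on jumping lines, which is the phenomenon the theorem is meant to detect; if it held for all $L$ the theorem would say every line is non-jumping, contradicting Examples 5.6 and 5.7 of the paper); second, $\textrm{H}^0(\cE|_L(-1))=0$ controls positive summands, not negative ones, and once you know there is no positive (equivalently, by $c_1=0$, no negative) summand you already have triviality without any $h^0$ count. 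There is a consonant problem in your middle paragraph: $H^0(b_A|_L)\colon W\to H_c^{\vee}\otimes K^{\vee}$ receives contributions from all of $H_c\otimes V$, not only from $H_c\otimes\langle v_1,v_2\rangle$, so its surjectivity is implied by, but not equivalent to, invertibility of the single block $\gamma(v_1\wedge v_2)$; the tensors $h\otimes v_j$ with $v_j\notin K$ map to $u\mapsto A(h\otimes v_j)(-\otimes u)$ and can make $H^0(b_A|_L)$ surjective even when $\gamma(v_1\wedge v_2)$ is degenerate.

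The paper avoids all of this by working with sections that vanish rather than with dimension counts: since $c_1(\cE|_L)=0$, the restriction is trivial if and only if no nonzero $s\in\textrm{H}^0(L,\cE|_L)$ has a zero. A section is identified with a vector $w\in W$ satisfying $\beta(v')(w)=0$ for all $v'\in K$, and it vanishes at $x=\PP(\KK v)$ precisely when $w\in\textrm{Im}\,\alpha(v)$; combining the two conditions, a vanishing section exists if and only if $\gamma(v\wedge v')=\beta(v')\circ\alpha(v)$ has nontrivial kernel for a basis $v,v'$ of $K$. If you want to salvage a cohomological formulation, the correct invariant is $h^0(\cE|_L(-1))$ (which vanishes iff there is no positive summand iff $\cE|_L$ is trivial), not $h^0(\cE|_L)$; but computing that from the twisted monad reintroduces the connecting map into $\textrm{H}^1(H_c\otimes\cO_L(-2))$ and essentially reproduces the paper's section-with-zeros argument.
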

\begin{proof}
Let $\cE$ be an orthogonal instanton bundle on $\PP^n$ with charge $c$, rank $r$ and no global sections, for $n,c\geq 3$. Consider the maps $(\alpha, \beta)$ and the Kronecker module $\gamma$ as before. Let $v_1,v_2 \in V$ such that $v_1 \wedge v_2\neq 0$, and consider the $\KK$-subspace $K=\KK v_1+ \KK v_2$. The restriction of the monad \eqref{monadE} to $L=\PP(K)$ is the monad

\begin{equation}\label{monadL}
\xymatrix{\cM_{A}: 0\ar[r]&H_c \otimes \cO_{L}(-1)\ar[r]^-{{a_A}|_L}&  W \otimes \cO_{L}  \ar[r]^-{{b_A|_L}}  &H^{\vee}_c \otimes \cO_{L}(1)\ar[r]&0.}
\end{equation}

\nii The display of the monad \eqref{monadL} gives the exact sequences

\begin{equation*}
\xymatrix{0\ar[r]&H_c \otimes \cO_{L}(-1)\ar[r]^-{{a_A}|_L}&  \textrm{Ker } ({b_A|_L})\ar[r] & \cE|_{L}\ar[r]&0,}
\end{equation*}

\begin{equation*}
\xymatrix{0\ar[r]&\textrm{Ker } ({b_A|_L})\ar[r]&  W \otimes \cO_{L}  \ar[r]^-{b_A|_L} &H^{\vee}_c \otimes \cO_{L}(1)\ar[r]&0,}
\end{equation*}

\nii thus

$$
H^0(L, \cE|_L)\cong H^0(L, \textrm{Ker }( b_A|_L))\cong \textrm{Ker }( W \rightarrow H_c^{\vee} \otimes K^{\vee}).
$$
\nii Observe that $\cE|_L$ has trivial splitting type if and only if no section $s \in H^0(L, \cE|_L)\setminus \{0\}$ has zeros, so our goal is to prove that this holds if and only if $\gamma(v_1\wedge v_2)$ is invertible. Consider the inclusions

\begin{equation*}
H_c\otimes \cO_L(-1)\stackrel{i}{\hookrightarrow} \textrm{Ker }(b_A|_L) \stackrel{j}{\hookrightarrow} W \otimes \cO_L.
\end{equation*}

\nii Let $s \in H^0(L, \textrm{Ker }(b_A|_L)))\cong H^0(L, \cE|_L)$ be a section; being $H^0(W\otimes \cO_L)\cong W$, there exists $w \in W$ with $j \circ s(x)=w$ for all $x \in L$. So the section $s^{\prime} \in H^0(L, \cE|_L)$ defined by $s$ has zeros at $x = \PP(\KK v)\in L$ if and only if $s(x)$ lies in the image of the inclusion $i(x):H_c\otimes \cO_L(-1) \hookrightarrow \textrm{Ker }(b_A|_L)(x)$, i.e. if and only if there exists $h \in H_c$ with $\alpha(v)(h)=w$. Because $s$ is a section in $ \textrm{Ker }(b_A|_L)$, for every $v^{\prime}\in K$ we must have $\beta(v^{\prime})(w)=0$, thus $\cE|_L$ has no trivial section with zeros if and only if
$$\textrm{Im } \alpha(v)\subset \bigcap_{v^{\prime}\in K}\textrm{Ker } \beta (v^{\prime}), $$

\nii for at least one vector $v \in K \setminus \{0\}$, which means that for any basis $v, v^{\prime} \in K$ of $K$ the map
$$\gamma(v\wedge v^{\prime})=\beta(v^{\prime})\circ \alpha(v)$$
\nii is not an isomorphism.
\end{proof}

\nii Now let us use the Theorem \ref{splitting} to determine the type of splitting of the bundle constructed in Example \ref{c6p3}.

\begin{exm}
\nii Let $\cE$ be the orthogonal instanton bundle on $\PP^3$ of Example \ref{c6p3}. Let $L\in \PP^3$ be the line joining two general points $P=[a:b:c:d]$ and $Q=[e:f:g:h]$. By Theorem \ref{splitting}, $\cE|_L$ is trivial if and only if $\beta(Q)\alpha(P)$ is invertible. In our case,

\begin{equation*}
\beta(Q)\alpha(P)=\left(\begin{array}{cccccc}
0&\lambda_1&0&0&0&0\\
-\lambda_1&0&0&0&0&0\\
0&0&0&-\lambda_2&0&0\\
0&0&\lambda_2&0&0&0\\
0&0&0&0&0&\lambda_3\\
0&0&0&0&-\lambda_3&0
\end{array}\right)
\end{equation*}
\nii where $\lambda_1=2be-2af-6dg+6ch$, $\lambda_2=-be+af+3dg-3ch$ and $\lambda_3=be-af-3dg+3ch$. Thus $\beta(Q)\alpha(P)$ is invertible and therefore $\cE|_L$ is trivial. Since the points are general this is also true for every general line, therefore $\cE$ has trivial splitting type.\\
\nii On the other hand, let $L_0$ be the line joining the points $P=[1:0:0:0]$ and $Q=[0:0:0:1]$. By the previous construction $\beta(Q)\alpha(P)=0$, therefore by Theorem \ref{splitting} $\cE|_{L_0}$ is not trivial, hence $L_0$ is a jumping line for $\cE$.
\end{exm}

\nii Finally, let us prove that the bundle presented in Example \ref{c5p3} has no trivial splitting type, as expected from (\cite{JMW2016} - Lemma 4.3).

\begin{exm}
\nii Let $\cE$ be the orthogonal instanton bundle on $\PP^3$ Example \ref{c5p3}. Let $L\in \PP^3$ be the line joint the points $P=[a:b:c:d]$ and $Q=[e:f:g:h]$. We have

\begin{equation*}
\beta(Q)\alpha(P)=\left(\begin{array}{ccccc}
0&\lambda_1&\lambda_2&0&\lambda_2\\
-\lambda_1&0&\lambda_1&\lambda_2&0\\
-\lambda_2&-\lambda_1&0&0&\lambda_2\\
0&-\lambda_2&0&0&\lambda_2\\
-\lambda_2&0&-\lambda_2&-\lambda_2&0
\end{array}\right)
\end{equation*}
\nii where $\lambda_1=be-af+dg-ch$ and $\lambda_2=de+cf-bg-ah$. Since $\beta(Q)\alpha(P)$ is a skew-symmetric matrix of odd order, $\beta(Q)\alpha(P)$ is not invertible for all $P$ and $Q$.  Thus by Theorem \ref{splitting}, $\cE|_L$ is not trivial for every line $L\in \PP^3$, i.e. $\cE$ has no trivial splitting type.

\end{exm}

\end{document}